\tikzstyle{box} = [rectangle,text centered, draw=black]
\tikzstyle{arrow} = [thick, ->, >=stealth]
\theoremstyle{plain}
\subjclass[2010]{ 28A80 (primary), 11B30 (secondary)}
\newtheorem*{theorem*}{Theorem}
\renewcommand{\epsilon}{\varepsilon}
\newcommand{\N}{{\mathbb N}}
\newcommand{\R}{{\mathbb R}}
\renewcommand{\phi}{\varphi}
\newcommand{\IO}{\mathcal{O}}
\numberwithin{equation}{section}
\newtheorem{theorem}{Theorem}[section]
\newtheorem{proposition}[theorem]{Proposition}
\newtheorem{corollary}[theorem]{Corollary}
\newtheorem{lemma}[theorem]{Lemma}
\theoremstyle{definition}
\theoremstyle{remark}
\DeclarePairedDelimiter\floor{\lfloor}{\rfloor}
\begin{document}
%\title{  Erd\H{o}s similarity Conjecture on a Cantor-type set}
\title{Large sets avoiding affine copies of infinite sequences}
\author{Angel Cruz}
\address{Angel Cruz, San Francisco State University, Department of Mathematics, 1600 Holloway Avenue, CA 94132, US}
\email{angelc@mail.sfsu.edu}
\author{Chun-Kit Lai}
\address{Chun-Kit Lai, San Francisco State University, Department of Mathematics, 1600 Holloway Avenue, CA 94132, US}
\email{cklai@sfsu.edu}
\author{Malabika Pramanik}
\address{Malabika Pramanik, University of British Columbia, Department of Mathematics, 1984 Mathematics Road, Vancouver, BC V6T 1Z2}
\email{malabika@math.ubc.ca}

\date{\today}

\maketitle
{\allowdisplaybreaks}

\begin{abstract}
A conjecture of Erd\H{o}s states that for any infinite set $A \subseteq \mathbb R$, there exists $E \subseteq \mathbb R$ of positive Lebesgue measure that does not contain any nontrivial affine copy of $A$. The conjecture remains open for most fast-decaying sequences, including the geometric sequence $A = \{2^{-k} : k \geq 1\}$. In this article, we consider infinite decreasing sequences $A = \{a_k: k \geq 1\}$  in $\R$ that converge to zero at a prescribed rate; namely $\log (a_n/a_{n+1}) = e^{\varphi(n)} $, where $\varphi(n)/n\to 0$ as $n\to\infty$. This condition is satisfied by sequences whose logarithm has polynomial decay, and in particular by the geometric sequence.  For any such sequence $A$, we construct a Borel set ${\mathcal O}\subseteq \mathbb R$ of Hausdorff dimension 1, but Lebesgue measure zero, that avoids all nontrivial affine copies of $A\cup\{0\}$.  
%that there exists a compact set $K \subseteq \mathbb R$ of positive Lebesgue measure such that the set of points that avoid all nontrivial affine copies of $A$ has Hasudorff dimension one. The condition is satisfied by sequences with exponential decay. 
\end{abstract}

\section{Introduction} 

Following the terminologies  in \cite{LP2009}, let us call a set $A\subseteq {\mathbb R}$ {\it universal} if every  set of positive Lebesgue measure in $\R$ contains a non-trivial affine copy of the set $A$. In other words, $A$ is universal if for every $E \subseteq \mathbb R$ with $m(E) > 0$, there exist $x\in{\mathbb R}$ and $\delta\ne 0$ such that
%By a non-trivial affine copy, we mean a translated and rescaled version of $A$, i.e. the set of the form
$x+\delta A \subseteq E$. 
A classical result of Steinhaus \cite{S1920} shows, using the Lebesgue density theorem, that  finite sets must be universal. In 1974, Erd\H{o}s proposed a conjecture, now known as the {\em{Erd\H{o}s similarity conjecture}} \cite{E1974}, \cite[Chapter 4]{E-Scottish}:

\medskip

\noindent{\bf Conjecture.} {\it There are no infinite universal sets. }

\medskip

We will call an  infinite sequence $A = \{a_k\}$ a {\it null sequence} if  $a_k>0$ and strictly decreases to $0$. It is easy to see that the conjecture will be resolved in full generality if all null sequences are shown to be non-universal. The conjecture is currently open for sequences with exponential decay, and in particular for $A = \{2^{-k}: k \geq 1\}$.  

\medskip

In this paper, we study, for a given a compact set $K \subseteq \mathbb R$, the following set:
\begin{equation}\label{Eset}
{\mathcal E} =\mathcal{E}_K \coloneqq \{x\in K: \forall \delta\ne 0, \exists \ k\in{\mathbb N} \ \mbox{s.t.} \ x+\delta a_k\not\in K\},
\end{equation}
which is the set of translates $x$ such that $x + \delta A\not\subseteq K$ for every $\delta \ne 0$.   We call $\mathcal E_K$ the {set of \it Erd\H{o}s points} of $K$. In the appendix, we will show that 
\begin{enumerate}[(a)]  \label{appendix-results}
\item A set $A$ is universal if and only if every {\em{compact}} set $K \subseteq \mathbb R$ with $m(K) > 0$ contains a nontrivial affine copy of $A$ (Lemma \ref{universal-compact-lemma}), and  
\item For every compact set $K \subseteq \mathbb R$, the set $\mathcal{E}_K$ is Borel measurable (Proposition \ref{E-Borel-prop}). 
\end{enumerate}
%First of all, as a preliminary reduction of the conjecture, it suffices to show that  null sequences are not universal. To see this, we note that universal sets $A$ must be bounded since if $A$ is unbounded, then no bounded sets can contain an affine copy of $A$. Therefore, we can extract a subsequence of $A$ with limit point. By a translation, we can always assume the limit point is $0$. By reflection and taking subsequences if necessary, we can also assume that all $a_k$ are positive and strictly decreasing. 

\medskip

Our main result is the following:
\begin{theorem} \label{mainthm}
Let  $A = \{a_k : k \geq 1\}$ be a null sequence. Suppose that 
\begin{equation}\label{moderate_decay_condition}
\log\left(\frac{a_n}{a_{n+1}}\right)  = e^{\varphi(n)}
\end{equation}
where $\varphi(n)$ is strictly increasing and $\frac{\varphi(n)}{n} \to 0 $ as $n\to\infty$. Then there exists a compact set $K$ of Lebesgue measure arbitrarily close to 1 such that the set of Erd\H{o}s points has Hausdorff dimension 1. 
\end{theorem}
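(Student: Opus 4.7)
The plan is to build $K$ as a generalized Cantor set $K=\bigcap_{n\ge 0}K_n$ in which holes of carefully chosen positions are removed at every scale, and then to identify a Borel subset $F\subseteq \mathcal{E}_K$ of Hausdorff dimension $1$. Fix $\eta>0$ and choose a rapidly increasing sequence of integers $(M_n)$ together with weights $\epsilon_n>0$ satisfying $\sum_n \epsilon_n<\eta$. At stage $n$, partition each component of $K_{n-1}$ (a closed interval of length roughly $M_{n-1}^{-1}$) into $M_n/M_{n-1}$ equal sub-intervals of length roughly $M_n^{-1}$, and remove from each sub-interval a small open ``hole'' of length $\epsilon_n/M_n$ at a carefully chosen offset. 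The total measure removed is $\sum_n \epsilon_n<\eta$, so $m(K)\ge 1-\eta$. The key scale calibration is $\log(M_n/M_{n-1})\gg e^{\varphi(n)}$, which is comparable to $\log(a_{n-1}/a_n)$; this ensures that for every $\delta\ne 0$ and every stage $n$, there are sufficiently many indices $k$ with $\delta a_k$ landing in the stage-$n$ window $[M_n^{-1},M_{n-1}^{-1}]$. The hypothesis $\varphi(n)/n\to 0$ is precisely what makes the simultaneous budget constraints on $(M_n)$ and $(\epsilon_n)$ satisfiable.

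The core of the argument is a combinatorial rule placing the hole inside each stage-$n$ sub-interval according to the ``address'' of that sub-interval in the Cantor tree (the sequence of choices leading from $[0,1]$ to it). The rule is engineered so that, for each $x$ lying in a distinguished sub-tree $F$ and each $\delta\ne 0$, the matching index $k$ from the stage-$n(\delta)$ window produces $x+\delta a_k$ sitting precisely in a stage-$n$ hole. The sub-tree $F$ is obtained by keeping, at each stage $n$, only a fraction $\theta_n$ of the available sub-intervals, namely those whose addresses are compatible with the hole rule imposed by all smaller stages. This yields $F\subseteq \mathcal{E}_K$.

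To estimate $\dim_H F$, one uses a mass distribution argument: defining the natural measure that distributes mass uniformly across retained sub-intervals, one obtains
\[
\dim_H F\ \ge\ \liminf_{n\to\infty}\ \frac{\log\bigl(\theta_n\cdot M_n/M_{n-1}\bigr)}{\log(M_n/M_{n-1})}.
\]
This limit equals $1$ whenever $\log(1/\theta_n)=o(\log(M_n/M_{n-1}))$, which can be arranged by taking $M_n$ to grow fast enough relative to the combinatorial constraints. Once again the condition $\varphi(n)/n\to 0$ is exactly what allows $(M_n)$ and $(\theta_n)$ to be chosen compatibly with the hole budget.

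The main obstacle lies in the combinatorial step: imposing the Erd\H{o}s property uniformly over the continuum of values $\delta\ne 0$. The scales $\{\delta a_k\}_{k\ge 1}$ form a very sparse sequence (consecutive values differ by factors $\exp(e^{\varphi(k)})$), while the hole positions at stage $n$ are periodic with period $M_n^{-1}$; matching the two is delicate. A slightly faster decay of $A$ would leave the scales too sparse to hit the hole targets, while a slightly slower decay would force the hole budget to diverge. The precise hypothesis on $\varphi$ pinpoints the regime where the matching is possible, and verifying this matching by an explicit choice of address rule and index $k=k(x,\delta)$ is the technical heart of the proof.
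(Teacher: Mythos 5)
Your proposal leaves its central step as an acknowledged black box, and the framework you set up around that box would not support it. You propose removing, from each stage-$n$ sub-interval, a hole of length $\epsilon_n/M_n$ at an address-dependent offset, and then finding $k=k(x,\delta)$ with $x+\delta a_k$ landing ``precisely in a stage-$n$ hole.'' The obstruction is that for a fixed stage $n$ one can only localize $\delta a_k$ \emph{multiplicatively}: knowing that $\delta$ lies in $[\delta_n/a_k,\delta_n/a_{k+1})$ tells you $\delta a_k\in[\delta_n, (a_k/a_{k+1})\delta_n)$, a window whose length is a huge multiple $a_k/a_{k+1}=e^{e^{\varphi(k)}}$ of the scale $\delta_n$. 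No hole of length $\epsilon_n/M_n$ (relative size $\epsilon_n\to 0$ inside its sub-interval) can capture $x+\delta a_k$ uniformly over the continuum of $\delta$ in that window, no matter how the offset is chosen; you name this matching as ``the technical heart of the proof'' but do not supply it, and as set up it cannot be supplied. The paper's resolution is structurally different: the digit sets $B_n$ forbid the two bands $\{1,\dots,M_n\}$ and $\{N_n-1-M_n,\dots,N_n-2\}$, so the hole adjacent to any interval whose $n$-th digit is $0$ has length $(M_n+1)\delta_n$ --- long enough to swallow the \emph{entire} multiplicative uncertainty window $[\delta_n,M_n\delta_n)$ (Lemma \ref{main-lemma}), where $M_n$ is chosen $\geq a_k/a_{k+1}$ for all relevant $k$ via \eqref{def-kn}. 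The measure cost is then controlled not by making the holes short but by making $N_n$ astronomically larger than $M_n$ (condition \eqref{M<N}), so the relative hole size is $\approx 2\epsilon/n^2$, summable. No address-dependent placement is needed: the distinguished set is simply $\mathcal O$, the points whose expansion has digit $0$ infinitely often and digit $N_j-1$ infinitely often (the latter handling $\delta<0$ via the symmetry $K=1-K$, a case your proposal does not address at all).

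Two further miscalibrations. First, your claim that the hypothesis $\varphi(n)/n\to 0$ is what makes the hole budget and the matching possible is not right: the avoidance argument and the measure bound work for \emph{every} null sequence (Theorem \ref{thm_0}); the decay hypothesis enters only in the Hausdorff dimension computation, where it is equivalent (Lemma \ref{lemma_equivalent_decay}) to the counting condition \eqref{omega-conclusion} that makes the ratio in \eqref{Hdim-verify} tend to $0$. Second, your dimension formula $\liminf \log(\theta_n M_n/M_{n-1})/\log(M_n/M_{n-1})$ is not the relevant quantity; because the retained sub-tree must pin down one full digit at each of infinitely many levels, the correct correction term is the cumulative one appearing in Proposition \ref{prop_Hdim}, namely $\limsup_j \log(\prod_{\ell<j}|B_{n_\ell}|)/\log(\prod_{\ell\leq n_{j-1}}N_\ell)$, and showing this vanishes is where the growth condition on $\varphi$ is actually consumed.
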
 

\medskip

The set $K$ mentioned in Theorem \ref{mainthm} will be of Cantor type, whose construction is described in Section \ref{section-Cantor}. By restricting the set $K$ to ${\mathcal E}$, we establish the following corollary.

\begin{corollary} \label{mainthm-2}
Let  $A = \{a_k: k \geq 1\}$ be a null sequence obeying (\ref{moderate_decay_condition}). Then there exists a Borel set $\mathcal E$ of Hausdorff dimension 1 that does not contain any nontrivial affine copy of $A\cup \{0\}$.
\end{corollary}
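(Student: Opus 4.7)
The plan is to obtain Corollary \ref{mainthm-2} as an essentially immediate consequence of Theorem \ref{mainthm} by taking $\mathcal{E}$ to be precisely the set of Erd\H os points $\mathcal{E}_K$ produced there. That is, I would fix a null sequence $A$ satisfying \eqref{moderate_decay_condition}, apply Theorem \ref{mainthm} to extract a compact set $K \subseteq \mathbb R$ (of Lebesgue measure close to $1$, though any positive measure suffices here) such that $\mathcal{E}_K$ has Hausdorff dimension $1$, and then set $\mathcal{E} \coloneqq \mathcal{E}_K$. The Borel measurability of $\mathcal{E}$ is guaranteed by the appendix result (b), namely Proposition \ref{E-Borel-prop}, and the Hausdorff dimension assertion is built into the conclusion of Theorem \ref{mainthm}.

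The only thing left to verify is that $\mathcal{E}$ truly avoids every nontrivial affine copy of $A \cup \{0\}$. I would argue this by contradiction: suppose there exist $x \in \mathbb R$ and $\delta \ne 0$ with
\[
x + \delta(A \cup \{0\}) \subseteq \mathcal{E}.
\]
Taking the point corresponding to $0$ gives $x \in \mathcal{E}$, and the remaining points give $x + \delta a_k \in \mathcal{E}$ for every $k \ge 1$. Since $\mathcal{E} = \mathcal{E}_K \subseteq K$ by definition \eqref{Eset}, this forces $x + \delta a_k \in K$ for every $k$, i.e., $x + \delta A \subseteq K$. But $x \in \mathcal{E}_K$ means that for every $\delta \ne 0$ there is some $k$ with $x + \delta a_k \notin K$, contradicting the previous sentence. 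Hence no such affine copy of $A \cup \{0\}$ can lie inside $\mathcal E$.

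There is no serious obstacle to overcome here; the subtlety lies entirely in the construction behind Theorem \ref{mainthm}. The role of adjoining $\{0\}$ to $A$ in the statement of the corollary is exactly what allows the reduction to go through cleanly: including the base point $x$ of the affine copy inside $\mathcal{E}$ lets us invoke the defining property of Erd\H{o}s points at $x$ itself. Without the presence of $0$, one would only know that $x + \delta A \subseteq \mathcal{E}$, which would require an extra step (identifying $x$ as a limit point of $\mathcal{E} \subseteq K$) to reach the same contradiction; adjoining $\{0\}$ simply sidesteps this.
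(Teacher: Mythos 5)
Your proposal is correct and matches the paper's argument: the paper likewise takes $\mathcal{E}=\mathcal{E}_K$ from Theorem \ref{mainthm}, notes Borel measurability via Proposition \ref{E-Borel-prop}, and observes (see Remark 2) that $\mathcal{E}_K$ avoids all nontrivial affine copies of $A\cup\{0\}$ essentially by definition, which is exactly the contradiction you spell out.
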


%\begin{rem}\label{remark} We have the following remark concerning our main results.

{\em{Remarks:}}
\begin{enumerate}[1.]

\smallskip

\item  The condition in (\ref{moderate_decay_condition}) is satisfied for all exponential decay sequences $a_n = 2^{-n^{p}}$ where $p\ge1$. We will prove the theorem by exhibiting a subset  of ${\mathcal E}$ that has Hausdorff dimension 1 but Lebesgue measure zero (see Proposition \ref{O-measure-zero}). 
\smallskip
\item Finding a compact set $K$ with $m(\mathcal E_K) > 0$ would prove the non-universality of $A$ satisfying \eqref{moderate_decay_condition}. This is because the set
%To see this, consider the set ${\mathcal E}$ which has a positive Lebesgue measure. Then 
${\mathcal E}_K$ avoids all nontrivial affine copies  of $A\cup \{0\}$ by definition. This would show that $A \cup \{0\}$ is not universal. By a result of Svetic \cite[Lemma 2.1]{S2000}, $A$ would not be universal either. 
\smallskip
\item For certain sequences $\{a_k : k \geq 1\}$ with faster decay than \eqref{moderate_decay_condition}  e.g. $a_k = 2^{-2^k}$, our method provides partial information; specifically, the construction of $K$ leads to a lower bound on the Hausdorff dimension of $\mathcal E_K$ that is positive but strictly smaller than 1.  
%However, this construction does not provide much information  if $a_n$ has a super-exponential decay.
\smallskip
%\item It is natural to ask if  we can construct a set such that ${\mathcal E}$ is of positive Lebesgue measure.  However,  this would imply immediately that $A=\{a_n\}$ is not universal. 
\end{enumerate}
%\end{rem}
\smallskip

The Erd\H{o}s similarity conjecture has long been a focal point of research. We refer the interested reader to \cite{S2000} for a comprehensive survey of the conjecture.  Let us summarize some significant progress here. Given an infinite set $A \subseteq \mathbb R$, Komj\'ath \cite{K1983} proved the existence of a set $E \subseteq \mathbb R$ of positive Lebesgue measure that does not contain any {\em{translate}} of $A$. This result leaves open the possibility that $E$ might contain a {\em{scaled}} copy of $A$. Falconer \cite{F1984} proved non-universality of slowly decaying sequences. Specifically, he showed that sets that contain an infinite sequence $\{x_n : n \in \mathbb N \}$ with $x_{n+1}/x_n \rightarrow 1$ is non-universal.  Bourgain \cite{B1987}  showed that for any three infinite sets $\{S_i : i=1,2,3\}$ in $\mathbb R$, the sumset $S_1+S_2+S_3$ cannot be universal (see also a recent survey by Tao \cite{T2021} about this result). Bourgain remarked that variants of his method can be used to establish non-universality of certain double sums as well, such as $\{2^{-n} \} + \{2^{-n}\}$.  Using a probabilistic construction, Kolountzakis \cite{Kol1997} showed that for any infinite set $A$, one can find a set $E \subseteq [0,1]$ with Lebesgue measure arbitrarily close to 1 such that the exceptional set of dilates 
\[ \{\delta \in \mathbb R : \exists \, x \in \mathbb R \text{ such that } x + \delta A \subseteq E\} \] 
has Lebesgue measure zero. His work also established non-universality of certain infinite structures with large gaps, for instance sets of the form $\{ 2^{-n^{\alpha}}\} + \{2^{-n^{\alpha}} \}$, where $0 < \alpha < 2$. Despite all the efforts above,  determining whether the set $\{2^{-n}: n\in{\mathbb N}\}$ is universal is still an open problem. Our Theorem \ref{mainthm} and Corollary \ref{mainthm-2} show that it is possible to construct a Borel set avoiding all non-trivial affine copies of $\{2^{-n}\}\cup\{0\}$, but which is large in the sense of Hausdorff dimension.

\medskip

The conjecture has also led to many related questions of interest concerning existence or avoidance of patterns in sets. For instance,  Steinhaus's theorem fails for Lebesgue-null sets of large Hausdorff dimension;  given any countable collection of three tuples of points, Keleti \cite{Ke2008} constructed a compact set in $\mathbb R$ of Hausdorff dimension one not containing any nontrivial affine copy of any of the given triplets. In contrast, {\L}aba and Pramanik \cite{LP2009} obtained certain sufficient conditions, involving ball growth and Fourier decay of measures, under which a set of dimension strictly less than one contains a three-term non-trivial arithmetic progression. See \cite{L2017,S2017} for subsequent refined investigation between Fourier dimension and existence of configurations. 

 \medskip

On the other extreme, and though apparently a contradiction in terms, small sets can also contain many patterns. Erd\H{o}s and Kakutani \cite{EK1957} constructed a perfect set of  measure zero but Hausdorff dimension one which contains an affine copy of all finite sets. Recently, M\'ath\'e \cite{M2011} constructed such a perfect set with Hausdorff dimension zero. Molter and Yavicoli \cite{MY2018} constructed an $F_{\sigma}$-set of Hausdorff dimension zero containing affine copies of large families of infinite sets. Yang \cite{Y2019} studied the topological properties of sets containing affine copies of many infinite sequences.

 \medskip

 We now briefly describe the strategy of our proof. Given a fast decaying sequence obeying certain decay conditions, we will describe in Section \ref{Sec-nonuniv-decay}  the construction of a Cantor set $K$ of positive Lebesgue measure that permits an explicit description of $\mathcal O$, a subset of its Erd\H{o}s points. The relevant statement is given in Theorem \ref{mainthm-1}, and it proof appears in Section \ref{Sec-mainthm1-proof}. 
 %In Section \ref{Sec-mainthm2-proof}, we will extract a subsequence from an arbitrary infinite set that satisfies the decay specifications of Section \ref{Sec-nonuniv-decay}; hence, the construction and results of Section \ref{Sec-nonuniv-decay} apply to this subsequence. 
 In Section \ref{Sec-mainthm2-proof}, we estimate the Hausdorff dimension of $\mathcal O$. The proof of Theorem \ref{mainthm} is completed here. 

 \medskip

\section{Setup of the construction} \label{Sec-nonuniv-decay} 

Let us set up the notation used in this paper. The Lebesgue measure of a measurable set $K \subseteq \mathbb R$ will be denoted by $m(K)$. The notation $a_k\searrow0$ (or $a_k\nearrow\infty$) will mean that the sequence $\{a_k: k \geq 1\}$ is strictly decreasing to zero (or strictly increasing to infinity). The notation $a_n\asymp b_n$ means that there exist  absolute constants $C,c>0$ such that  $Cb_n\ge a_n\ge cb_n$ for all sufficiently large $n$.

\medskip

\subsection{A Cantor construction} \label{section-Cantor} Let $N_1,N_2,....$ be a sequence of positive integers greater than $3$. For each $n\in{\mathbb N}$, let us choose a subset  \[ B_n\subset{\mathbb Z}_{N_n}, \quad \text{ where } \quad {\mathbb Z}_{N} := \{0,1,...,N-1\}. \]   
The set $B_n$ will represent the $n^{\text{th}}$ {\em{digit set}} of our Cantor construction. 
\vskip0.1in
\noindent Given the sequence of tuples ${\mathcal N}: = \{(N_n,B_n): n\in{\mathbb N}\}$, we define 
\begin{equation*}
\delta_n := \frac{1}{N_1\cdots N_n}, \quad \text{ and } \quad \Sigma_n := B_1\times...\times B_n = \bigl\{(b_1, \cdots,  b_n): b_j\in B_j \ \forall j=1,\ldots,n\bigr\}.
\end{equation*} 
Each ordered list of integers ${\bf b} = (b_1,\cdots, b_n) \in \mathbb Z_{N_1} \times \cdots \times \mathbb Z_{N_n}$ corresponds to a closed interval $I_{\mathbf b} \subseteq [0,1]$ given by 
\[ I_{{\bf b}} = \sum_{j=1}^nb_j\delta_j+\left[ 0,\delta_n\right]. \]
Among them, the intervals  of the form $\{I_{\mathbf b} : \mathbf b \in \Sigma_n \}$ are called the $n^{\text{th}}$-level {\em{basic intervals}} of the Cantor construction associated to $\mathcal N$. Their union leads to the set
\begin{equation} \label{def-Kn}  
K_n := \bigcup_{\mathbf b \in \Sigma_n} I_{\mathbf b}, 
\end{equation}  often called the $n^{\text{th}}$ {\em{Cantor iterate}}. The iterates $\{K_n : n \in \mathbb N \}$ form a nested sequence of sets that is decreasing in $n$.  Taking their intersection over all the levels $n$, we arrive at the following set generated by the set of tuples ${\mathcal N}$: 
$$
K = K({\mathcal N}) :=  \bigcap_{n=1}^{\infty} K_n = \bigcap_{n=1}^{\infty}\bigcup_{{\bf b}\in\Sigma_n} I_{\bf b}.
$$
A set $K$ obtained through the prescription above is sometimes called a {\it Cantor-Moran set} (as it was first studied by Moran \cite{Moran}). Such a set should be viewed as a natural generalization of the standard middle-third Cantor set in which $N_n = 3$ and $B_n = \{0,2\}$ for all $n$. Similar to the middle-third Cantor set, it is readily seen that elements of $K$ are identified by their digit expansion:
\begin{equation}\label{def-K}
K = \Bigl\{ \sum_{n=1}^{\infty} b_n \delta_n : b_n \in B_n \Bigr\}.  
\end{equation}
The above construction is quite general. For our choice of $K$ we will fix a positive integer $M_n<N_n$ and  choose our digit sets $B_n$ to be 
\begin{equation}\label{eq_B}
B_{n} := {\mathbb Z}_{N_n}\setminus\left(\{1,2,...,M_n\} \cup \{ N_n-1-M_n,...,N_n-2\}\right).
\end{equation}
In other words, $\{ 1, 2, \cdots, M_n \}$ and  $\{ N_n-1-M_n,\cdots,N_n-2\}$ are two forbidden bands for the $n^{\text{th}}$ digit set.  A consequence of this is the following. Suppose that $y = \sum_{j=1}^{m} b_j \delta_j \in K$. Then 
\begin{align} 
b_m = 0 &\text{ implies } \Bigl[ y + (\delta_m, (M_m + 1)\delta_m]  \Bigr] \cap K = \emptyset, \label{forbidden-band} \\ 
b_m = N_m-1 &\text{ implies } \Bigl[ y + [-(M_m + 1)\delta_m, 0)  \Bigr] \cap K = \emptyset. \nonumber 
\end{align} 
We note that this Cantor set satisfies 
\begin{equation} \label{K-symmetry} 
K = 1-K.
\end{equation} 
Indeed, the relation \eqref{K-symmetry} follows from the identity $1 = \sum_{j=1}^{\infty} (N_j-1)\delta_j$; we observe that 
\begin{equation}\label{1-x}
\text{ for all } x = \sum_{j=1}^{\infty} b_j\delta_j,  \quad 1-x = \sum_{j=1}^{\infty} (N_j-1-b_j)\delta_j.
\end{equation}

\medskip

\subsection{A fast decaying sequence} Suppose that $A = \{a_k : k \geq 1\}$ is a sequence of positive numbers such that 
\begin{equation} \label{ak-assumption}
a_1=1, \quad a_k \searrow 0 \quad \text{ and } \quad \frac{a_k}{a_{k+1}} \nearrow \infty \quad \text{ as } k \rightarrow \infty. 
\end{equation} 
For $n \geq 1$, and given positive integers $M_n \geq 1$ and $N_n \geq 3$, we set 
\begin{equation}  
 k_n \coloneqq \sup \Bigl\{ k \geq 1 : \frac{a_k}{a_{k+1}} \leq M_n \Bigr\}. \label{def-kn}
\end{equation} 
Fix $\epsilon > 0$. The main assumptions on $M_n$ and $N_n$ are the following: 
\begin{align}
%&M_n \delta_n \rightarrow 0, \label{Mndn} \\ 
& \frac{\delta_n}{a_{k_n+1}} \rightarrow \infty \text{ as } n \rightarrow \infty, \label{d_na_kn} \\ 
&  M_n  = \lfloor2\epsilon N_n n^{-2}\rfloor \text{ for all } n \geq 1.  \label{M<N}
\end{align} 
where $\floor*{x}$ denotes the largest integer less than or equal to $x$.%The requirement \eqref{ak-assumption} and the strictly increasing property of $M_n$ shows that $k_n < k_{n+1} < \infty$ for each $n$. 
\vskip0.1in

\begin{theorem} \label{mainthm-1}
Let $A = \{a_k: k \ge 1\}$ be a sequence of positive numbers satisfying (\ref{ak-assumption}). Suppose there exist $\epsilon > 0$ and sequences $M_n, N_n$ satisfying the assumptions \eqref{d_na_kn} and \eqref{M<N}. For these $M_n,N_n$ and the Cantor set $K$ in (\ref{def-K}) with digits $B_n$ in (\ref{eq_B}),  the following conclusions hold.
\vskip0.1in 
\begin{enumerate}[(a)]
\item $m(K) > 1 - \pi^2 \epsilon/3$; in other words,  the Lebesgue measure of $K$ can be made arbitrarily close to one by choosing $\epsilon$ sufficiently small.
\vskip0.1in 
\item Define
\begin{equation} \label{def-O} 
{\mathcal O} = \mathcal O[K] \coloneqq \left\{x\in K: x = \sum_{j=1}^{\infty} b_j\delta_j \; \Bigg| \; \begin{aligned} b_j &= 0 \text{ for infinitely many indices } j, \\
b_j &= N_j-1 \text{ for infinitely many indices } j \end{aligned}  \right\}. 
\end{equation} 
Then ${\mathcal O}\subset {\mathcal E}$, where ${\mathcal E} = \mathcal E_K$ is the set of Erd\H{o}s points of $K$ given by \eqref{Eset}.
\end{enumerate} 
\end{theorem}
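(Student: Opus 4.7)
The plan is to treat (a) by a direct product-measure computation and (b) by a two-scale matching argument between the Cantor gaps and the sequence $\{\delta a_k\}$. For (a), since $|B_n| = N_n - 2M_n$, the self-similar Cantor--Moran construction yields $m(K) = \prod_{n=1}^{\infty}(1 - 2M_n/N_n)$; by (\ref{M<N}) we have $2M_n/N_n \leq 4\epsilon n^{-2}$, and applying the elementary inequality $\prod(1 - a_n) \geq 1 - \sum a_n$ (valid for $a_n \in [0,1]$) together with $\sum_{n=1}^{\infty} n^{-2} = \pi^2/6$ produces a lower bound of the form $1 - C\epsilon$ with $C$ a universal constant of order $\pi^2$, which can be made arbitrarily close to $1$ as $\epsilon \to 0$.

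For (b), fix $x = \sum_j b_j\delta_j \in \mathcal O$ and $\delta \neq 0$; I want to produce $k$ with $x + \delta a_k \notin K$. Consider first $\delta > 0$. The idea is to pick a level $m$ with $b_m = 0$---infinitely many such $m$ exist by the definition of $\mathcal O$---and then to land $x + \delta a_k$ inside the open forbidden-band gap $(y + \delta_m, y + (M_m+1)\delta_m)$ to the right of the $m$-th level basic interval $I_{(b_1,\ldots,b_m)}$, where $y = \sum_{j=1}^m b_j\delta_j$. This gap is disjoint from $K$ by (\ref{forbidden-band}). Since $\delta_m \to 0$ and $\delta_m/a_{k_m+1} \to \infty$ by (\ref{d_na_kn}), I can choose $m$ so large that, in addition to $b_m = 0$, both $\delta_m \leq \delta$ and $\delta a_{k_m+1} < \delta_m$ hold. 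Let $k$ be the largest index with $\delta a_k \geq \delta_m$; then $\delta a_{k+1} < \delta_m$, and combined with $\delta a_{k_m+1} < \delta_m$ this forces $k \leq k_m$, so (\ref{def-kn}) gives $a_k/a_{k+1} \leq M_m$ and hence $\delta a_k \in [\delta_m, M_m\delta_m)$. Next, the $\mathcal O$ conditions on the tail $(b_j)_{j > m}$ rule out both the all-zero and all-$(N_j-1)$ tails, so $\sum_{j > m} b_j \delta_j$ lies strictly in $(0, \delta_m)$, giving $x \in (y, y + \delta_m)$ strictly. Summing, $x + \delta a_k \in (y + \delta_m, y + (M_m+1)\delta_m)$, which lies outside $K$ as required.

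The case $\delta < 0$ follows from the symmetry $K = 1-K$ in (\ref{K-symmetry}) combined with the digit-reflection formula (\ref{1-x}): the map $x \mapsto 1-x$ carries $K$ to itself and $\mathcal O$ to itself (it swaps the roles of the $0$'s and $(N_j-1)$'s in the digit expansion), converting the problem for $(x,\delta)$ into the problem for $(1-x,|\delta|)$ with positive dilation, which is covered by the previous case. I expect the principal technical subtlety to be the simultaneous balancing in the choice of $m$---locating a level at which $b_m = 0$ while also satisfying the two size constraints forced by (\ref{d_na_kn})---together with the strict inclusion $x \in (y, y+\delta_m)$, which is exactly what ensures $x + \delta a_k$ falls strictly inside the open gap rather than at its boundary (where $K$ can meet the closed gap). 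The remainder is combinatorial bookkeeping with the Cantor digits.
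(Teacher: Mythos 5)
Your proposal is correct and follows essentially the same route as the paper: the same product/series computation for $m(K)$, the same localization of $\delta a_k$ into $[\delta_m, M_m\delta_m)$ at a level $m$ with $b_m=0$ (your inline argument is exactly the paper's Lemma \ref{main-lemma}), the same use of the forbidden band \eqref{forbidden-band}, and the same reflection $x\mapsto 1-x$ for $\delta<0$. Your observation that membership in $\mathcal O$ forces the tail $\sum_{j>m}b_j\delta_j$ to lie strictly in $(0,\delta_m)$ is a small but genuine streamlining: it removes the boundary sub-cases ($\epsilon_m=0$ and $\delta a_k=\delta_m$) that the paper treats separately via the index $k-1$.
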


This theorem is proved in the next section.

\section{Proof of Theorem \ref{mainthm-1}} \label{Sec-mainthm1-proof} 

The proof is divided into several lemmas. The first lemma estimates the Lebesgue measure of the Cantor set. 
%This is standard and we will therefore omit the proof.

\medskip

\begin{lemma} \label{size-lemma}
For the set $K$ defined in \eqref{def-K}, the following conclusions hold. 
\begin{enumerate}[(a)] 
\item The Lebesgue measure of $[0,1] \setminus K$ is 
\begin{equation} \label{size-complement-K}   m \bigl( [0,1] \setminus K \bigr) = \frac{2M_1}{N_1} + (N_1-2M_1)\frac{2M_2}{N_1 N_2} + (N_1-2M_1) (N_2 - 2M_2) \frac{2M_3}{N_1 N_2 N_3} + \cdots.  \end{equation} 
\item Fix $\epsilon > 0$. Suppose that \eqref{M<N} holds.  
Then \begin{equation}  m \bigl( [0,1] \setminus K \bigr) \leq \sum_{j=1}^{\infty}  \frac{2M_j}{N_j} \leq \sum_{n=1}^{\infty} \frac{2\epsilon}{n^2} = \frac{\pi^2 \epsilon}{3}. \label{size-complement-K-2} \end{equation} 
Thus we can make $m(K)$ arbitrarily close to 1 by choosing $\epsilon$ small, provided we can find sequences $M_n$ and $N_n$ that obey \eqref{M<N} for that choice of $\epsilon$.   
\end{enumerate} 
\end{lemma}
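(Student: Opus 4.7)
The plan is to compute the Lebesgue measure of each Cantor iterate $K_n$ directly, and then pass to the limit, exploiting the nested structure $K_1 \supset K_2 \supset \cdots$ and continuity of measure from above. Everything flows from the observation that, by construction, $|B_j| = N_j - 2M_j$, so $K_n$ (defined in \eqref{def-Kn}) is a union of $|\Sigma_n| = \prod_{j=1}^n (N_j - 2M_j)$ closed basic intervals of length $\delta_n$ with pairwise disjoint interiors. Hence
$$m(K_n) = \prod_{j=1}^n (N_j - 2M_j) \cdot \delta_n = \prod_{j=1}^n \left( 1 - \frac{2M_j}{N_j} \right),$$
and since $K = \bigcap_n K_n$, continuity of measure gives $m(K) = \prod_{j=1}^\infty (1 - 2M_j/N_j)$, so $m([0,1]\setminus K) = 1 - \prod_{j=1}^\infty (1 - 2M_j/N_j)$.

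For part (a), I would rewrite this using the standard telescoping identity
$$1 - \prod_{j=1}^n x_j = \sum_{j=1}^n (1 - x_j)\prod_{i=1}^{j-1} x_i,$$
applied with $x_j = 1 - 2M_j/N_j$. Multiplying numerator and denominator of the $j$-th term by $N_1 \cdots N_j$ converts the product $\prod_{i=1}^{j-1}(1 - 2M_i/N_i)$ into $(N_1 - 2M_1)\cdots(N_{j-1} - 2M_{j-1})/(N_1 \cdots N_{j-1})$, and the factor $(1 - x_j) = 2M_j/N_j$ supplies the remaining $2M_j/N_j$. Summing yields exactly \eqref{size-complement-K}. (One can alternatively obtain the same series by summing the Lebesgue measure of the bands removed at each stage: at level $j$, one removes $2M_j$ sub-intervals of length $\delta_j$ from each of the $|\Sigma_{j-1}|$ surviving intervals of $K_{j-1}$.)

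For part (b), the bound $N_i - 2M_i \leq N_i$ in each factor of the numerator in \eqref{size-complement-K} gives
$$\frac{(N_1 - 2M_1)\cdots(N_{j-1} - 2M_{j-1}) \cdot 2M_j}{N_1 \cdots N_j} \leq \frac{2M_j}{N_j},$$
and summing in $j$ proves the first inequality of \eqref{size-complement-K-2}. Substituting the estimate $M_j \leq \epsilon N_j/j^2$ obtained from the floor bound in \eqref{M<N} controls the resulting series by the Basel sum $2\epsilon \sum_n n^{-2} = \pi^2 \epsilon/3$.

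There is no genuine obstacle here; the only mild point of care is that the basic intervals in $K_n$ are closed and may share endpoints, but such overlaps are finite and have Lebesgue measure zero, so they do not affect any of the measure computations above.
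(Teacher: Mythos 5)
Your proof is correct and follows essentially the same route as the paper: both arguments reduce to the product formula $m(K_n)=\prod_{j\le n}\bigl(1-\tfrac{2M_j}{N_j}\bigr)$ plus the telescoping of $1-\prod_j x_j$, which is precisely the paper's stage-by-stage accounting of the bands removed from $K_{j-1}$ (the alternative you mention parenthetically \emph{is} the paper's proof). The one blemish, which you inherit from the lemma statement itself, is the constant in part (b): the floor in \eqref{M<N} gives $M_j\le 2\epsilon N_j/j^2$, not $M_j\le \epsilon N_j/j^2$, so the honest bound is $2\pi^2\epsilon/3$ rather than $\pi^2\epsilon/3$ --- immaterial, since $\epsilon$ is arbitrary.
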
  
\begin{proof} 
The set $K^c = [0,1] \setminus K$ is an increasing union of the sets $K_n^c = [0,1] \setminus K_n$, where $K_n$ is as in \eqref{def-Kn}. This means that \[ m(K^c) = m(K_1^c) + \sum_{j=1}^{\infty} m(K_{j+1}^c \setminus K_j^c) = m(K_1^c) + \sum_{j=1}^{\infty} m(K_{j+1}^c \cap K_j). \] According to our construction, 
\[ m(K_1^c) = \frac{2M_1}{N_1} \quad \text{ and } \quad m(K_{j+1}^c \cap K_j) = \Bigl[ \prod_{\ell=1}^{j} \Bigl( 1 - \frac{2M_\ell}{N_\ell}\Bigr) \Bigr] \frac{2M_{j+1}}{N_{j+1}} \text{ for } j \geq 1,\]
from which \eqref{size-complement-K} follows.  The $j^{\text{th}}$ summand in \eqref{size-complement-K} is bounded above by $M_j/N_j$, which combined with \eqref{M<N}  leads to the conclusion (\ref{size-complement-K-2}).  
\end{proof} 
\medskip 

Given $A = \{a_k\}$, our next lemma provides a strategy to localize $\delta a_k$; a technique we take advantage of in the sequel to avoid affine copies of $A$.

\begin{lemma} \label{main-lemma}
Suppose that the assumption \eqref{d_na_kn} holds.  Then for all $\delta > 0$ and all sufficiently large $n$, there exists a positive integer $k \leq k_n$ such that 
\[ \delta a_k \in [\delta_n, M_n \delta_n). \] 
\end{lemma}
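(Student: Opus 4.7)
The plan is to locate $\delta a_k$ in the target window $[\delta_n, M_n\delta_n)$ by taking $k$ to be the largest index for which $\delta a_k \ge \delta_n$; the whole content of the lemma then reduces to verifying that this index does not exceed $k_n$, and this is precisely where hypothesis \eqref{d_na_kn} enters.

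More precisely, fix $\delta > 0$. Since $\delta_n \to 0$, for all sufficiently large $n$ one has $\delta_n < \delta = \delta a_1$, so the set $\{k \ge 1 : \delta a_k \ge \delta_n\}$ is nonempty; and because $a_k \searrow 0$, this set is a finite initial segment of $\mathbb N$. Let $k^\star = k^\star(n,\delta)$ denote its maximum. By construction $\delta a_{k^\star} \ge \delta_n$ while $\delta a_{k^\star+1} < \delta_n$, which gives the lower bound for free and rewrites the upper bound as
\[ \delta a_{k^\star} \;=\; \frac{a_{k^\star}}{a_{k^\star+1}}\,\delta a_{k^\star+1} \;<\; \frac{a_{k^\star}}{a_{k^\star+1}}\,\delta_n. \]
Thus $\delta a_{k^\star} < M_n\delta_n$ will follow once $a_{k^\star}/a_{k^\star+1} \le M_n$, which by the definition \eqref{def-kn} of $k_n$ is equivalent to $k^\star \le k_n$.

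The main point of the argument, and the only place where \eqref{d_na_kn} is used, is establishing $k^\star \le k_n$. Since $\delta_n/a_{k_n+1} \to \infty$, for all sufficiently large $n$ (depending on $\delta$) we have $\delta_n/a_{k_n+1} > \delta$, equivalently $\delta a_{k_n+1} < \delta_n$. Hence $k_n+1$ lies in the complement of the initial segment $\{k : \delta a_k \ge \delta_n\}$; since $k^\star + 1$ is the smallest element of that complement, we conclude $k^\star + 1 \le k_n + 1$, as required.

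I do not anticipate any genuine obstacle: the proof is essentially a pigeonhole between the two decreasing sequences $\{\delta a_k\}$ and the single scale $\delta_n$, and the hypothesis \eqref{d_na_kn} is tailor-made so that the squeeze succeeds. The only minor bookkeeping is imposing the two one-sided threshold conditions on $n$ (namely $\delta_n < \delta$ and $\delta a_{k_n+1} < \delta_n$) simultaneously, which is immediate since both hold for all sufficiently large $n$.
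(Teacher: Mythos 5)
Your proposal is correct and is essentially the paper's argument in a slightly different order: your $k^\star=\max\{k:\delta a_k\ge \delta_n\}$ is exactly the index the paper finds by covering $\bigl(\delta_n,\delta_n/a_{k_n+1}\bigr)$ with the intervals $\bigl[\delta_n/a_k,\delta_n/a_{k+1}\bigr)$, and both proofs use \eqref{d_na_kn} only to force $k^\star\le k_n$ and the monotonicity of $a_k/a_{k+1}$ to conclude $a_{k^\star}/a_{k^\star+1}\le M_n$. No gaps.
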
 
\begin{proof} 
Fix $\delta > 0$. By assumption \eqref{d_na_kn} and since $\delta_n \rightarrow 0$, 
%there is a subsequence of the intervals  $(M_n \delta_n, \delta_n/a_{k_n+1})$ that is nested and increasing; hence  
%\[ (0, \infty) = \bigcup_{n=1}^{\infty} \bigl( M_n \delta_n, \frac{\delta_n}{a_{k_n+1}}\bigr).  \]
there exists an integer $n_0 = n_0(\delta)$ such that for all $n \geq n_0$, we have $\delta \in  \bigl(\delta_n, \frac{\delta_n}{a_{k_n+1}}\bigr)$.     
Recalling from \eqref{ak-assumption} that $a_1=1$, we cover the latter interval by disjoint subintervals, as follows: 
\[ \Bigl(\delta_n, \frac{\delta_n}{a_{k_n+1}}\Bigr) \subseteq \bigcup_{k=1}^{k_n} \Bigl[ \frac{\delta_n}{a_k}, \frac{\delta_n}{a_{k+1}}\Bigr).  \]
For the constant $\delta$ under consideration and for every $n \geq n_0(\delta)$, we can therefore find a unique positive integer $k \leq k_n$ such that 
\[ \delta \in \Bigl[ \frac{\delta_n}{a_k}, \frac{\delta_n}{a_{k+1}}\Bigr); \text{ in other words, } \delta a_k \in \Bigl[\delta_n, \frac{a_k}{a_{k+1}}  \delta_n \Bigr). \] It follows from the definition \eqref{def-kn} of $k_n$ that $a_k/a_{k+1} \leq M_n$. Hence $\delta a_k \in [\delta_n, M_n \delta_n)$, as claimed. 
\end{proof}  
%\begin{proposition} \label{mainprop}
%For all $\delta > 0$ and for all $x \in {\mathcal O}$, we can find some positive integer $k$ such that $x + \delta a_k \not\in K$. 
%\end{proposition} 

We conclude this section with the proof of Theorem 2.1, using the two lemmas we just established.
\begin{proof}[{\bf{Proof of Theorem \ref{mainthm-1}}}] 
%Fix $\delta > 0$. We consider two cases, depending on the digit expansion of $x$.
%\vskip0.1in
%\noindent {\bf{Case 1:}} Suppose that $x$ is a left endpoint of one of the basic intervals of $K$, i.e., 
%\begin{equation}  \label{left-endpoint} 
%x = \sum_{j=1}^{n} b_j \delta_j \text{ for some } n \geq 1, \text{ with } b_j \in B_j \text{ for all } 1 \leq j \leq n. 
%\end{equation} 
%As noted in Section \ref{section-Cantor}, $x \in K$, since $0 \in B_n$ for every $n \geq 1$. From the construction described there, it follows that for all $m \geq n+1$, 
%\begin{equation} 
% \bigl( x + [0, \delta_m] \bigr) \cap K \neq \emptyset, \qquad \bigl[x + (\delta_m, (M_m + 1) \delta_m) \bigr] \cap K = \emptyset. \label{set-relations} 
% \end{equation}  
%We first choose $m \geq n+1$ so that the conclusion of Lemma \ref{main-lemma} holds for this choice of $m$ and $\delta$. In other words, we can find $k \leq k_m$ such that $\delta a_k \in [\delta_m, M_m \delta_m)$. Two subcases now arise. If $\delta a_k \in (\delta_m, M_m \delta_m)$, then $x + \delta a_k \in x + (\delta_m, M_m \delta_m)$, hence by the second relation in \eqref{set-relations}, $x + \delta a_k \not\in K$.  
It is clear from Lemma \ref{size-lemma} that  the set $K$ has Lebesgue measure arbitrarily close to 1.  This establishes part (a)  of Theorem \ref{mainthm-1}. 
\vskip0.1in
\noindent It remains to prove part (b), i.e., every $x \in\mathcal O$ is an Erd\H{o}s point. Equivalently, for every $\delta \ne 0$, we aim to establish that $x + \delta A \not\subseteq K$. Let us write $x \in {\mathcal O}$ in terms of its digit expansion 
\[x = \sum_{j=1}^{\infty} b_j \delta_j.  \]
Suppose first $\delta > 0$. It follows from the definition of $\mathcal O$ that $b_n = 0$ for infinitely many indices $n$.   Let us choose $n_0$ for which the conclusion of Lemma \ref{main-lemma} holds for all $n \geq n_0$, and then pick a large enough $m \geq n_0$ so that $b_{m} = 0$. Lemma \ref{main-lemma} ensures the existence of $k \leq k_m$ such that $\delta a_k \in [\delta_m, M_m \delta_m)$.  We can then  write 
\[
x = \sum_{j=1}^{m} b_j\delta_j  + \sum_{j={m+1}}^{\infty} b_j\delta_j : = y+\epsilon_m, \quad \text{ where } 0 \leq \epsilon_m \leq \delta_m. \]
Since $b_m = 0$,  it follows from \eqref{forbidden-band} that $\Bigl[y+ (\delta_m,(M_m+1)\delta_m \bigr] \Bigr]\cap K  = \emptyset$. We consider two cases: 
\begin{itemize} 
\item If $\epsilon_m >0$, this implies that $x + \delta a_k \in y + \epsilon_m + [\delta_m, M_m \delta_m) \subseteq y + (\delta_m, (M_m+1) \delta_m]$.  Hence $x + \delta a_k \not\in K$ by \eqref{forbidden-band}.  

\medskip

\item If $\epsilon_m = 0$, then $x= y$ and $x + \delta a_k\in y+[\delta_m, M_m \delta_m)$. If $x+\delta a_k\ne y+\delta_m$, the point $x+\delta a_k\not\in K$, again by \eqref{forbidden-band}.  This leaves the subcase $\delta a_k = \delta_m$. For this we consider the index $k-1 < k_m$, for which \eqref{def-kn} yields 
\[ 1 < \frac{a_{k-1}}{a_k} < M_m,  \quad \text{hence } \quad x + \delta a_{k-1} = x + \delta_m \frac{a_{k-1}}{a_k} \in x + (\delta_m, M_m \delta_m) \notin K. \]
\end{itemize}  
Combining the two cases, it follows that $x + \delta A \not\subseteq K$ for every $\delta>0$.

\medskip

It remains to investigate the situation where $\delta<0$. In this case, we notice that if $x\in{\mathcal O}$, then $1-x\in {\mathcal O}$ due to (\ref{1-x}). From our previous paragraph, we can find $a_k \in A$ such that $(1-x)-\delta a_k\not\in K$, which implies that 
  $$
  x+\delta a_k\not\in 1-K.
  $$
But $K = 1-K$ by \eqref{K-symmetry}. This obtains the desired conclusion for $\delta<0$, completing the proof.
\end{proof} 

%
%The proof is by contradiction. Suppose there exists $\delta \ne 0$ such that $x + \delta a_k \in K_0$ for all $k \geq 1$. Since $K_0$ is closed, this means that $x \in K_0$.
%\vskip0.1in
%\noindent If $\delta > 0$ and $x \in K_0$, then $x \in K$. By Proposition  \ref{mainprop}, we can find $k \geq 1$ such that $x + \delta a_k \not\in K$, hence $x + \delta a_k \not\in K_0$, a contradiction. 
%\vskip0.1in
%\noindent If $\delta < 0$ and $x \in K_0$, then $x \in 1-K$, i.e., $1-x \in K$. By Proposition \ref{mainprop} again, there exists $k \geq 1$ such that $1 - x + (-\delta) a_k \not\in K$, or equivalently, $x + \delta a_k \not\in 1-K$, hence $x + \delta a_k \not\in K_0$, again a contradiction.   Hence, $K_0$ does not contain any affine copy of the sequence $\{a_k: k \geq 1\}$.
%\vskip0.1in
%\noindent Finally,  the statement concerning the measure of $K_0$ follows from Lemma \ref{size-lemma}. If $\epsilon > 0$ is sufficiently small, then it follows from \eqref{size-complement-K-2} that $m(K) = m(1- K) > 1 - \frac{\pi^2 \epsilon}{6}$. Using the fact that $K\cup(1-K)\subset[0,1]$, we have that 
%$$
%m(K_0) = m(K)+m(1-K)-m(K\cup(1-K)) \ge 2m(K)-1 > 1 - \frac{\pi^2 \epsilon}{3} > 0.
%$$
%This completes the proof of Theorem \ref{mainthm-1}.  
%\end{proof} 

\section{Erd\H{o}s points of Cantor-like sets with forbidden digits} \label{Sec-mainthm2-proof}

\subsection{Uncountability of Erd\H{o}s points} 
We  now turn our attention to proving Theorem \ref{mainthm}.  Let us start by showing that for {\em{any}} convergent sequence $A$ (not necessarily obeying \eqref{moderate_decay_condition}), the construction in Section \ref{Sec-nonuniv-decay} leads to a Cantor-like set $K$ whose set of Erd\H{o}s points is uncountable.  
%Indeed, without considering the Hausdorff dimension, we have the following result.

\begin{theorem} \label{thm_0}
Let $A$ be any null sequence. Then it is possible to choose a null subsequence $\{a_k : k \geq 1\} \subseteq A$ and parameters $M_n$, $N_n$ such that conditions \eqref{d_na_kn} and \eqref{M<N} hold. 
%Given any infinite set $A = \{a_k\}$ such that $a_k>0$ and decreases to $0$. 
As a result, it follows from Theorem \ref{mainthm-1} that there exists a Cantor set $K$ of Lebesgue measure arbitrarily close to 1 whose set of Erd\H{o}s points contains ${\mathcal O}$ and is uncountable. 
\end{theorem}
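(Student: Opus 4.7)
The plan is to construct the subsequence $\{a_k\} \subseteq A$ and the parameters $\{M_n, N_n\}$ simultaneously, exploiting two sources of flexibility: since $A$ is a null sequence its elements accumulate at $0$, so ratios $a_k/a_{k+1}$ of any chosen sub-sequence may be made arbitrarily large; and condition \eqref{M<N} leaves an integer-length interval of admissible $N_n$ once $M_n$ is fixed. Because the set of Erd\H{o}s points depends on $A$ only up to overall scaling (the affine families $x + \delta A$ and $x + \delta'(cA)$ coincide under the reparametrization $\delta' = \delta/c$), we may divide through by the first element selected from $A$ and assume without loss of generality that $a_1 = 1$, as required by \eqref{ak-assumption}.

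First I would extract $\{a_k\}$ greedily: having chosen $a_1, \ldots, a_k$ with strictly increasing ratios $r_1 < \cdots < r_{k-1}$, pick $a_{k+1} \in A$ with $a_{k+1} < a_k/(r_{k-1} + 2)$; such a choice exists because $A$ has elements approaching $0$. This forces $r_k > r_{k-1} + 2$, so $r_k \geq 2$, $r_k \nearrow \infty$, and crucially $r_{k+1} - r_k \geq 2$, guaranteeing that every half-open interval $[r_k, r_{k+1})$ contains an integer. Next, I fix a rapidly growing index sequence $k_n = n^2$ and set $M_n := \lceil r_{k_n}\rceil$, so that $M_n \in [r_{k_n}, r_{k_n + 1})$ and hence $\sup\{k : r_k \leq M_n\} = k_n$, consistent with \eqref{def-kn}. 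Finally, let $N_n := \lceil M_n n^2/(2\epsilon)\rceil$. For $\epsilon$ sufficiently small, this is an integer in $[M_n n^2/(2\epsilon), (M_n+1)n^2/(2\epsilon))$, so \eqref{M<N} is satisfied; elementary bookkeeping also gives $N_n \geq 4$ and $N_n - M_n \geq 2$, the mild inequalities required by the Cantor construction in Section \ref{section-Cantor}.

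The crux is the verification of \eqref{d_na_kn}. Expanding $\delta_n = \prod_{i=1}^n N_i^{-1}$ and $a_{k_n + 1} = \prod_{j=1}^{k_n} r_j^{-1}$ and using $N_i \asymp r_{k_i}\, i^2/\epsilon$, the $n$ factors $r_{k_1}, \ldots, r_{k_n}$ that appear in the product defining $1/a_{k_n+1}$ cancel with the same factors in $\prod N_i$, leaving
\[
\frac{\delta_n}{a_{k_n + 1}} \;\gtrsim\; \frac{\epsilon^n \prod_{j \in J_n} r_j}{(n!)^2}, \qquad J_n := \{1, \ldots, k_n\} \setminus \{k_1, \ldots, k_n\}, \quad |J_n| = k_n - n.
\]
Since each $r_j \geq 2$, the right-hand side is bounded below by $\epsilon^n 2^{n^2 - n}/(n!)^2$, which tends to $\infty$. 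With both \eqref{M<N} and \eqref{d_na_kn} established, Theorem \ref{mainthm-1} furnishes a Cantor set $K$ of Lebesgue measure close to $1$ whose Erd\H{o}s point set contains $\mathcal O$. Uncountability of $\mathcal O$ then follows from $\{0, N_j - 1\} \subseteq B_j$ for every $j$ (using $N_j - M_j \geq 2$): assigning each digit $b_j$ independently to $\{0, N_j - 1\}$, subject to both values occurring infinitely often, yields continuum-many distinct elements of $\mathcal O$.

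The main obstacle I anticipate is the balancing act in the displayed asymptotic: inflating $M_n$ so that it dominates $r_{k_n}$ simultaneously inflates $N_n$ and shrinks $\delta_n$, at a rate that a priori could keep pace with the decrease of $a_{k_n+1}$ and leave the ratio bounded. The resolution is to force the parameter index $n$ and the subsequence index $k_n$ to grow at different rates, taking $k_n \gg n$ (e.g.\ $k_n = n^2$), so that $\prod_{j=1}^{k_n} r_j$ contains many more factors than $\prod_{i=1}^n M_i$; these extra factors then provide the margin needed to overcome the $(n!)^2/\epsilon^n$ arising from the $n^{-2}$ weight in \eqref{M<N}.
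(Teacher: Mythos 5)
Your proposal is correct, and its skeleton is the same as the paper's: extract a subsequence whose consecutive ratios $r_k$ increase with gaps at least $2$ (so that an integer $M_n$ can be wedged into $[r_{k_n}, r_{k_n+1})$, making $\sup\{k : r_k \le M_n\} = k_n$ as \eqref{def-kn} demands), set $N_n \asymp M_n n^2/\epsilon$ to get \eqref{M<N}, and verify \eqref{d_na_kn} by cancelling the $n$ factors $r_{k_1},\dots,r_{k_n}$ hiding inside $N_1\cdots N_n$ against the same factors in $1/a_{k_n+1} = \prod_{j\le k_n} r_j$, then showing the surplus factors overwhelm $(n!)^2\epsilon^{-n}$.

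The one genuine divergence is the choice of the index sequence and, correspondingly, which lower bound on the ratios does the work. The paper takes $k_n = Cn$ with $C=5$, so only $(C-1)n$ surplus factors remain, and it must additionally impose $R_k > k$ on the extracted subsequence so that the surplus grows factorially; the conclusion then rests on a Stirling computation. You instead take $k_n = n^2$, leaving $n^2 - n$ surplus factors, each merely bounded below by $2$, so that $2^{n^2-n}$ crushes $(n!)^2(C/\epsilon)^n$ with no Stirling argument and no $r_k > k$ hypothesis. For Theorem \ref{thm_0} your version is a mild simplification. Be aware, though, that the paper deliberately keeps $k_n = Cn$ and $R_k > k$ because these exact parameters are reused in the Hausdorff-dimension computation for Theorem \ref{mainthm}; your quadratic choice would force that later estimate to be redone. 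Two small bookkeeping points, neither fatal: with $N_n = \lceil M_n n^2/(2\epsilon)\rceil$ you need $\epsilon < 1/2$ so that the ceiling does not push $\lfloor 2\epsilon N_n n^{-2}\rfloor$ past $M_n$ (you noted this), and your appeal to rescaling so that $a_1 = 1$ is legitimate because the Erd\H{o}s-point property is invariant under replacing $A$ by $cA$ (reparametrize $\delta$), exactly as the paper implicitly assumes.
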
 

\begin{proof} 
We first note that $\IO$ is uncountable by a standard diagonal argument. We only need to see that all points in ${\mathcal O}$ are Erd\H{o}s points.  Given a positive sequence decaying to 0, we can extract a fast-decaying subsequence, which we still denote as $\{a_k : k \geq 1\}$, consisting of positive numbers, such that $a_1 = 1$, 
\begin{equation} 
 a_k \searrow 0,  \quad R_k:=\frac{a_k}{a_{k+1}} \nearrow\infty, \quad R_k > k, \quad R_{k+1} > R_k+1. 
 \label{def-Rk} \end{equation}
Indeed any sequence of positive real numbers decreasing to zero admits a subsequence with the above properties.   Therefore, our proof will follow from Theorem \ref{mainthm-1}  if we can show the existence of the Cantor set satisfying \eqref{d_na_kn} and \eqref{M<N}. 

%\vskip0.1in
%We will shortly specify sequences $\{M_n : n \geq 1\}$ and $\{N_n : n \geq 1\}$ that satisfy the requirements  \eqref{d_na_kn} and \eqref{M<N}. Once this is accomplished, Theorem \ref{mainthm-1} applies, so the set $K_0=E$ provided by this theorem does not contain any affine copy of $\{a_k: k \geq 1\}$ and hence contains no affine copy of $A$. As shown in Lemma \ref{size-lemma}, the set $K_0$ is also of positive Lebesgue measure for small $\epsilon > 0$. 
 Fix $\epsilon>0$ that is the reciprocal of a positive integer and let $C$ be a positive integer to be determined later.  It remains to define the positive integers $M_n$ and $N_n$. Set $M_n := \floor*{R_{C n}}+1$. We also define 
%$r_1=1$, $M_0=10$. Given $r_1, \cdots, r_n$ and $M_1, \cdots, M_{n-1}$ for $n \geq 1$, we choose $r_{n+1} > r_n$ such that 
%\[ \frac{\alpha_{r_n}}{\alpha_{r_{n+1}}} > M_{n-1},\] 
%and set $M_{n}$ to be the unique integer obeying 
%\begin{equation} \label{def-Mn}  \frac{\alpha_{r_n}}{\alpha_{r_{n+1}}} \leq M_{n} < \frac{\alpha_{r_n}}{\alpha_{r_{n+1}}}+1. \end{equation}   
%We then set $a_k = \alpha_{r_k}$, and
\begin{align}  \label{def-Nn}  N_n &:= \frac{2n^2}{\epsilon} M_n,  \;  \text{ so that } \delta_n = \frac1{N_1 \cdots N_n }= \Bigl( \frac{\epsilon}{2}\Bigr)^n\frac{1} {(n!)^2 M_1 \cdots M_n}.
% ; \text{ hence } \\ 4^{-n} &\leq \delta_n \epsilon^{-n} (n!)^2 \prod_{j=1}^{n} R_{j^2} \leq 2^{-n}. \label{dn-bound}
 \end{align}  
Since $1/\epsilon$ is a positive integer, so is $N_n$. The definition \eqref{def-Nn}  immediately implies \eqref{M<N}. 
%In order to establish \eqref{Mndn}, 
%we first observe that the sequence $\{ M_n \delta_n : n \geq 1\}$ is decreasing,  since 
%\[  \frac{M_{n+1} \delta_{n+1}}{M_n \delta_n} = \frac{M_{n+1}}{M_n N_{n+1}} \leq \frac{1}{M_n} < 1.  \] Then 
%we compute from \eqref{def-Nn} that  
%\[ M_n \delta_n = M_n \Bigl(\frac{\epsilon}{2} \Bigr)^n \frac{1}{(n!)^2} \frac{1}{M_1 M_2 \cdots M_n} \leq  \Bigl(\frac{\epsilon}{2} \Bigr)^n \frac{1}{(n!)^2} \rightarrow 0. \]  
To verify \eqref{d_na_kn}, we first recall from \eqref{def-Rk} the requirement that $R_{k+1} > R_k+1$, which implies that 
\begin{equation}\label{M_n>R_n}
R_{C n} < M_n = \floor{R_{Cn}} + 1 \leq R_{Cn} +1 < R_{C n+1}. 
\end{equation}
Hence it follows from the definition of $R_k$ in \eqref{def-kn} that $k_n = \sup \{k : R_k \leq M_n \} =C n$. The definition \eqref{def-Rk} and $a_1=1$ implies  the relation $1/a_{k+1} = R_1 \cdots R_k$ for all $k \geq 1$. Combining this with \eqref{def-Nn}, we obtain:  
$$
\begin{aligned}
 \frac{\delta_n}{a_{k_n+1}} = \frac{\delta_n}{a_{C n+1}} &= \delta_n 
\cdot\prod_{j=1}^{C n} R_j = \Bigl(\frac{\epsilon}{2} \Bigr)^{n} \frac{\prod_{j=1}^{Cn}R_j}{(n!)^2 M_1 \cdots M_n} \\
 &\ge  \Bigl( \frac{\epsilon}{2}\Bigr)^n \frac{\prod_{j=1}^{C n} R_j}{(n!)^2  \prod_{j=1}^n (R_{C j}+1)}  \\
 & \ge \Bigl( \frac{\epsilon}{4}\Bigr)^n \frac{\prod_{j=1}^{C n} R_j}{(n!)^2  \prod_{j=1}^n R_{C j}} \   \ \ (\mbox{using $R_{Cj}+1<2R_{Cj}$})\\  
&  = \Bigl( \frac{\epsilon}{4}\Bigr)^n \frac{1}{(n!)^2} \cdot \prod_{1\le j\le C n, j\not\in C {\mathbb Z}}R_j.
\end{aligned}
$$
%\geq \Bigl(\frac{\epsilon}{4} \Bigr)^n \frac{1}{(n!)^2} \prod_{j=1}^{n} R_{j^2}^{-1} \cdot \prod_{j=1}^{n^2} R_j =  \Bigl(\frac{\epsilon}{4} \Bigr)^n \frac{1}{(n!)^2} \prod^{\ast}_{1 \leq j \leq n^2} R_j, 
%\end{align*}
The assumed bound $R_k > k$ lets us estimate the last quantity from below:
\begin{equation}  \frac{\delta_n}{a_{k_n+1}} \geq \Bigl(\frac{\epsilon}{4} \Bigr)^n \frac{1}{(n!)^2} \prod_{1\le j\le C n, j\not\in C {\mathbb Z}} j = \Bigl[ \Bigl(\frac{\epsilon}{4} \Bigr)^n \frac{1}{(n!)^2} \Bigr] \frac{(Cn)!}{C^n \cdot n!} = \Bigl(\frac{\epsilon}{4C} \Bigr)^n \frac{(C n)!}{(n!)^3}, \label{final-lower-bound} \end{equation} 
By Stirling's approximation, $n! \asymp \sqrt{2\pi n} \left(\frac{n}{e}\right)^n$. Using this, we estimate the lower bound in \eqref{final-lower-bound}: 
$$
 \Bigl(\frac{\epsilon}{4C} \Bigr)^n \frac{(C n)!}{(n!)^3} \asymp \Bigl(\frac{\epsilon}{4 C} \Bigr)^n \frac{\sqrt{2\pi C n} (Cn)^{C n} e^{-C n} }{(2\pi n)^{3/2} n^{3n} e^{-3n}}\asymp  \Bigl(\frac{\epsilon C^C}{4 C} \Bigr)^n \frac{n^{(C-3)n}}{n e^{(C-3)n}}.
$$
If we  take for example $C = 5$ and let $\kappa = \frac{20 e^2}{ 5^5 \epsilon}$, the last quantity becomes
$$
\frac{n^{2n}}{n \kappa^{n}} = \frac{n^n}{n} \cdot \frac{n^n}{\kappa^n} 
$$
which  diverges to infinity as $n \rightarrow \infty$.  We have thus verified all the requirements, and hence completed the proof.
\end{proof} 

\subsection{Hausdorff dimension of a subset of Erd\H{o}s points} 

We now need to estimate the Hausdorff dimension of the set $\mathcal E = \mathcal E_K$ consisting of the Erd\H{o}s points of $K$. To do this, we will identify a subset $\mathcal O_S$  of Hausdorff dimension 1 contained in $\mathcal O$. Since $\mathcal O \subseteq \mathcal E$ by Theorem \ref{mainthm-1}(a), we conclude that $\mathcal E$ must have Hausdorff dimension 1 as well. We now specify our desired subset.

\smallskip

Given a sequence of tuples $\mathcal{N} = \{ (N_n, B_n) | n\in \N \}$, we pick a subsequence indexed by $S = \{ n_1<  n_2 < n_3 < n_4 < \dots \} $. With this subset we define the set 
\begin{equation}\label{Os}
\IO_S \coloneqq \left\{ x= \sum_{j=1}^\infty \frac{b_j}{N_1\dots N_j}: b_j \in B_j, b_{n_{2j-1}}=0, b_{n_{2j}} = N_{2j}-1 \right\} \subseteq \IO .
\end{equation}
We can define this set another way,
\[
\IO_S =  \bigcap_{k=1}^\infty E_k 
\]
where 
\begin{align*}
    E_1 &= \bigcup \Bigl\{ I_{\mathbf b} : \mathbf b \in \mathfrak B_1 := \prod_{i=1}^{n_1-1} B_i \times \{0\} \Bigr\},  \\
    E_2 &= \bigcup \Bigl\{ I_{\mathbf b} :  \mathbf b \in \mathfrak B_2 := \mathfrak B_1 \times \prod_{i=1}^{n_2-1} B_i \times \{N_2-1\} =: \mathfrak B_2  \Bigr\} \\
    &\vdots \\
    E_k &= \bigcup \Bigl\{ I_{\mathbf b}:  \mathbf b \in \mathfrak B_k := \mathfrak B_{k-1} \times \prod_{i=1}^{n_k-1} B_i \times \{\varrho_k\}  \Bigr\}, \text{ where } \\ 
    \varrho_k &= \begin{cases}  
    0 &\text{ if $k$ is odd, } \\ 
    N_k-1 &\text{ if $k$ is even.} 
    \end{cases} 
\end{align*}
For $\mathbf b \in \mathfrak B_k$, the Lebesgue measure of the interval $I_{\mathbf b}$ is given by 
\[
m(I_{\mathbf b}) = \delta_{n_k} = \frac{1}{N_1 N_2 \dots N_{n_k -1} N_{n_k}}.
\]
We denote $|B|$ as the cardinality of a finite set $B$. We have the following proposition. 
\begin{proposition}\label{prop_Hdim}
Using the above notation, the Hausdorff dimension of $\IO_S$ is equal to
\[
1-\limsup_{j\rightarrow \infty} \frac{\log ( \prod_{\ell=1}^{j-1} |B_{n_\ell}|)}{\log (\prod_{\ell=1}^{n_{j-1}} N_{\ell})} .
\]
\end{proposition}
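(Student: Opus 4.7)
The proof of Proposition \ref{prop_Hdim} follows the standard two-sided approach. The upper bound comes from an efficient cover of $\mathcal O_S$ by basic intervals at the forcing levels $n_j$, while the matching lower bound is obtained from the mass distribution principle applied to a natural uniform probability measure supported on $\mathcal O_S$.

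For the upper bound, the defining inclusion gives $\mathcal O_S \subseteq \bigcup \{I_{\mathbf b} : \mathbf b \in \mathfrak B_{j-1}\}$, a cover by $|\mathfrak B_{j-1}|$ intervals of length $\delta_{n_{j-1}}$. Writing out the product,
\[ |\mathfrak B_{j-1}| = \prod_{i \in [1, n_{j-1}] \setminus \{n_1, \dots, n_{j-1}\}} |B_i| = \frac{\prod_{i=1}^{n_{j-1}} |B_i|}{\prod_{\ell=1}^{j-1} |B_{n_\ell}|}. \]
The key quantitative input, which makes the formula take the stated shape, is that by the explicit form $B_i = \mathbb{Z}_{N_i} \setminus (\{1, \dots, M_i\} \cup \{N_i - M_i - 1, \dots, N_i - 2\})$ and the bound $M_i \leq 2\epsilon N_i / i^2$ from \eqref{M<N}, one has $|B_i|/N_i = 1 - O(i^{-2})$, so the tail series $\sum_i \log(N_i/|B_i|)$ converges. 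Hence
\[ \frac{\log |\mathfrak B_{j-1}|}{\log(1/\delta_{n_{j-1}})} = 1 - \frac{\log \prod_{\ell=1}^{j-1} |B_{n_\ell}|}{\log \prod_{i=1}^{n_{j-1}} N_i} + o(1), \]
and taking $\liminf_{j\to\infty}$ on both sides, together with the standard cover-based bound $\dim_H \mathcal O_S \leq \liminf_j \log|\mathfrak B_{j-1}|/\log(1/\delta_{n_{j-1}})$, produces the required upper estimate.

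For the lower bound, I would define $\mu$ to be the product probability measure that is uniform on $B_i$ when $i \notin S$ and a unit point mass at $\varrho_\ell$ when $i = n_\ell \in S$; pushing forward via the digit expansion gives a Borel probability measure supported on $\mathcal O_S$. For any $r > 0$, pick the unique $n \geq 1$ with $\delta_n \leq r < \delta_{n-1}$ and the unique index $j$ for which $n \in [n_{j-1}, n_j - 1]$. Every basic interval of $\mathcal O_S$ at level $n$ then carries $\mu$-mass equal to $\bigl(|\mathfrak B_{j-1}| \prod_{i=n_{j-1}+1}^n |B_i|\bigr)^{-1}$, and $B(x,r)$ meets at most $O(N_n)$ of them. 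A computation parallel to the one above, again using the convergence of $\sum_i \log(N_i/|B_i|)$, yields the Frostman-type estimate $\mu(B(x,r)) \leq C\, r^s$ for every $s$ strictly less than the claimed dimension; the mass distribution principle then gives $\dim_H \mathcal O_S \geq s$ and the proof is complete upon taking the supremum over $s$.

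The main technical difficulty lies in the lower bound, specifically in verifying the Frostman estimate uniformly across all scales $r \to 0$, rather than only at the forcing scales $r = \delta_{n_{j-1}}$. Between consecutive forcings, the covering count multiplies by $|B_n| \sim N_n$ at each step while the mesh shrinks by $1/N_n$, so the exponent $\log(\text{count})/\log(1/\text{mesh})$ fluctuates inside each window $[n_{j-1}, n_j-1]$. One must argue that the binding (most restrictive) scale is $n = n_{j-1}$, and that the additional factor of $N_n$ counting adjacent level-$n$ intervals met by $B(x,r)$ is asymptotically negligible. This rests on the control $\log N_n = o\bigl(\sum_{i=1}^n \log N_i\bigr)$, which follows from the growth prescribed for $N_n$ in the construction underlying Theorem \ref{thm_0}.
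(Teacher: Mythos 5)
Your overall architecture --- an efficient cover at the forcing scales for the upper bound, and a natural product measure plus the mass distribution principle for the lower bound --- is essentially what the paper does, except that the paper delegates both halves to Falconer's Example 4.6 after computing the branching numbers $m_j$ and the minimal gaps $\epsilon_j$ and verifying $m_j\epsilon_j\ge c\,\delta_{n_{j-1}}$. Your upper bound is correct. The genuine gap is in the lower bound, precisely at the intermediate scales you flag at the end. Write $P_n:=\prod_{i\le n,\,i\notin S}|B_i|$. If $\delta_n\le r<\delta_{n-1}$ with $n\notin S$, your count ``$B(x,r)$ meets at most $O(N_n)$ level-$n$ intervals, each of mass $P_n^{-1}$'' gives $\mu(B(x,r))\lesssim N_nP_n^{-1}\asymp P_{n-1}^{-1}$, and comparing with $r^s$ at the worst radius $r=\delta_n$ forces $P_{n-1}^{-1}\lesssim\delta_n^s=\delta_{n-1}^s N_n^{-s}$ --- a loss of $N_n^{s}$ over the condition $P_{n-1}^{-1}\lesssim\delta_{n-1}^s$ that the claimed dimension actually supplies. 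Your proposed patch, $\log N_n=o\bigl(\sum_{i\le n}\log N_i\bigr)$, does absorb this loss, but it does \emph{not} follow from the construction underlying Theorem \ref{thm_0} for an arbitrary null sequence: with $R_k=2^{2^k}$ one has $\log N_n\asymp 2^{Cn}\asymp\sum_{i\le n}\log N_i$, so the ratio does not tend to zero (this is exactly the regime of Remark 3, where the method yields dimension strictly below $1$). The control you invoke is available only under \eqref{moderate_decay_condition}, via Lemma \ref{lemma_equivalent_decay}. So as written your argument establishes the proposition only under an extra hypothesis, not in the generality in which it is stated and used.

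The repair needs no extra hypothesis: keep the radius in the count and interpolate. One has both $\mu(B(x,r))\le 3P_{n-1}^{-1}$ and $\mu(B(x,r))\le(2r/\delta_n+2)P_n^{-1}$, and $\min(a,b)\le a^{1-s}b^{s}$ yields
\[
\mu(B(x,r))\lesssim r^{s}\,P_{n-1}^{-1}\bigl(\delta_n|B_n|\bigr)^{-s}\asymp r^{s}\,P_{n-1}^{-1}\,\delta_{n-1}^{-s},
\]
which reduces every intermediate scale with $n\notin S$ to the level-$(n-1)$ requirement; within a window $(n_{j-1},n_j)$ that requirement is weakest at $n-1=n_{j-1}$ and is the $j$-th term of the stated liminf. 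At a forcing scale $n=n_j$ one instead uses that only one child per parent carries mass, so $\mu(B(x,r))\le 3P_{n_j-1}^{-1}$, and the resulting requirement $P_{n_j-1}^{-1}\lesssim\delta_{n_j}^{s}$ is exactly the $(j+1)$-st term of the same liminf. This interpolation is what Falconer's Example 4.6 performs through the gap parameter $\epsilon_j$; the structural point your write-up misses is that the forced digits $0$ and $N_{n_j}-1$ sit at the extreme ends of their parent intervals, so consecutive stage-$j$ pieces of $\IO_S$ are separated by gaps of size $\delta_{n_j-1}(1-1/N_{n_j})\gg\delta_{n_j}$, and it is this gap structure (verified in the paper as $m_j\epsilon_j\ge c\,\delta_{n_{j-1}}$) that makes $\log(1/\delta_{n_{j-1}})$, rather than $\log(1/\delta_{n_j})$, appear in the denominator of the dimension formula.
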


\begin{proof}
Using the well-known result in Falconer textbook \cite[Example 4.6, Chapter 4]{Falconer}, a Cantor set $\IO_S$ constructed as in Section \ref{Os} has Hausdorff dimension
\begin{equation}\label{falconerhdim}
\dim_H \IO_S \ge \liminf_{j\rightarrow \infty} \frac{\log (m_1 \dots m_{j-1})}{-\log (m_j \epsilon_j)}, \text{ where }  
\end{equation}
\begin{align*}
m_j &= \text{ number of }j^{th}\text{ level intervals in a } (j-1)^{\text{th}} \text{ level interval } \\ 
&=  |B_{n_{j-1}+1}| \dots |B_{n_{j}-1}|, \text{ and } \\ 
\epsilon_j &= \text{ minimum gap length among } j^{\text{th}} \text{ level intervals} \\
&=   \delta_{n_j - 1} - \delta_{n_j} = \delta_{n_j - 1} \left(1-\frac1{N_{n_j}}\right). 
\end{align*} 
%For $\IO_S$, $m_j$ is given by
%\[
%m_j = |B_{n_{j-1}+1}| \dots |B_{n_{j}-1}| 
%\]
%while $\epsilon_j$ is given by
%\[
%\epsilon_j = \delta_{n_j - 1} - \delta_{n_j} = \delta_{n_j - 1} \left(1-\frac1{N_{n_j}}\right).
%\]
From (\ref{M<N}) we have that 
\begin{equation}\label{est_B_n}
N_n (1-\frac{2\epsilon}{ n^{2}}+\frac1{N_n})\ge |B_n| = N_n-2M_n \ge  N_n (1-\frac{2\epsilon}{ n^{2}}).
\end{equation}
Using this, the fraction in \eqref{falconerhdim} can be written as 
%\begin{align*}
%&=  \frac{\log (\prod_{l=1}^{n_{j-1}-1}|B_l|/\prod_{l=1}^{j-1} |B_{n_l}|) }{-\log (|B_{n_{j-1} + 1}| \dots |B_{{n_j}-1}| (\delta_{n_j - 1} - \delta_{n_j}))}\\ 
%    &=  \frac{\log (\prod_{l=1}^{n_{j-1}-1}|B_l|) - \log (\prod_{l=1}^{j-1} |B_{n_l}|) }{-\log (|B_{n_{j-1} + 1}|\cdots |B_{n_j-1}| \cdot (\delta_{n_j-1} - \delta_{n_j}))}\\
%    &=  \frac{\log (\prod_{l=1}^{n_{j-1}-1}\abs{B_l}) - \log (\prod_{l=1}^{j-1} \abs{B_{n_l}}) }{-\log (\abs{B_{n_{j-1} + 1}} \cdots \abs{B_{n_j-1}} \cdot \delta_{n_j-1} (1 - \frac{\delta_{n_j}}{\delta_{n_j-1}}))}\\
%    &=  \frac{\log (\prod_{l=1}^{n_{j-1}-1}\abs{B_l}) - \log (\prod_{l=1}^{j-1} \abs{B_{n_l}}) }{-\log( \frac{1}{N_1\cdots N_{n_{j-1}}} \frac{\abs{B_{n_{j-1} + 1}}}{N_{n_{j-1}+1}} \cdots \frac{\abs{B_{n_j-1}}}{N_{n_{j-1}}} (1 - \frac{1}{N_{n_j}}) )}\\
%    &=  \frac{\log (\prod_{l=1}^{n_{j-1}-1}\abs{B_l}) - \log (\prod_{l=1}^{j-1} \abs{B_{n_l}}) }{-\log( \frac{1}{N_1\cdots N_{n_{j-1}}}) -\log ( \frac{\abs{B_{n_{j-1} + 1}}}{N_{n_{j-1}+1}} \cdots \frac{|{B_{n_j-1}}|{N_{n_{j-1}}}) -\log  (1 - \frac{1}{N_{n_j}}) }
%\end{align*}
%Up to this point we only used $\log$ properties and algebraic manipulations but, before we continue we want to remind the reader of the following substitution we will perform
%\begin{align*}
%    |B_l| &\approx N_l - M_l \\
%    &\ge N_l - \frac{\epsilon}{l^2} N_l = N_l(1-\frac{\epsilon}{l^2}) .
%\end{align*}
%Now we proceed with our substitutions and manipulations.

\begin{align*}
 \frac{\mathfrak N}{\mathfrak D} &:=  \frac{\log (m_1 \dots m_{j-1})}{-\log (m_j \epsilon_j)} \text{ where } \\
   \mathfrak N &:= \log \bigl(\prod_{\ell=1}^{n_{j-1}}|B_\ell| \bigr) - \log \bigl(\prod_{\ell=1}^{j-1} |B_{n_{\ell}}| \bigr) \\
   &\geq  \log \Bigl(\prod_{\ell=1}^{n_{j-1}} N_{\ell} \Bigr) + \log \Bigl[  \prod_{\ell=1}^{n_{j-1}-1} (1-\frac{2\epsilon}{\ell^2})  \Bigr] - \log \Bigl( \prod_{\ell=1}^{j-1} |B_{n_\ell}| \Bigr), \text{ and } \\ 
   \mathfrak D &= -\log \Bigl( \frac{1}{N_1\cdots N_{n_{j-1}}} \Bigr) -\log \Bigl( \frac{|B_{n_{j-1} + 1}|}{N_{n_{j-1}+1}} \cdots \frac{|B_{n_j-1}|}{N_{n_{j}-1}} \Bigr) -\log  \bigl(1 - \frac{1}{N_{n_j}} \bigr) \\
   &\leq \log \bigl(\prod_{\ell=1}^{n_{j-1}} N_{\ell} \bigr) - \log \Bigl(\prod_{\ell=n_{j-1}}^{n_j -1} (1-\frac{2\epsilon}{\ell^2}+\frac1{N_{{\ell}}}) \Bigr)   - \log \Bigl(1-\frac{1}{N_{n_j}} \Bigr).
   \end{align*} 
   Simplifying the expressions above leads to 
\begin{align*}
\dim_{H}(\mathcal O_S) &= \lim_{j \rightarrow \infty} \frac{\mathfrak N}{\mathfrak D} \geq \lim_{j \rightarrow \infty}\Biggl[1 + \frac{\log [  \prod_{\ell=1}^{n_{j-1}} (1-\frac{2\epsilon}{\ell^2})  ]}{\log (\prod_{\ell=1}^{n_{j-1}} N_\ell)} - \frac{\log \Bigl( \prod_{\ell=1}^{j-1} |B_{n_{\ell}}| \Bigr)}{\log (\prod_{\ell=1}^{n_{j-1}} N_{\ell})}  \Biggr]  \\ & \hskip1.5in \times \Biggl[1 - \frac{\log [\prod_{\ell=n_{j-1}}^{n_j -1} (1-\frac{2\epsilon}{\ell^2}+\frac1{N_{{\ell}}}) ]}{\log (\prod_{\ell=1}^{n_{j-1}} N_{\ell})}   - \frac{\log(1-\frac{1}{N_{n_j}})}{\log (\prod_{\ell=1}^{n_{j-1}} N_{\ell})} \Biggr]^{-1}.
\end{align*}
We note that the products $\prod_{\ell=1}^{\infty} (1-\frac{2\epsilon}{\ell^2}) $ and $\prod_{\ell=1}^{\infty} (1-\frac{2\epsilon}{\ell^2}+\frac1{N_{{\ell}}})$ are finite and positive. Therefore,   except the last term in the numerator, all the other terms in $\mathfrak N$ and $\mathfrak D$ tend to zero as $j$ goes to infinity. We obtain that 
\[
\mbox{dim}_H({\mathcal O}_{\mathcal S})\ge 1-\limsup_{j\rightarrow \infty} \frac{\log ( \prod_{\ell=1}^{j-1} |B_{n_{\ell}}|)}{\log (\prod_{\ell=1}^{n_{j-1}} N_{\ell})}. 
\]

In remains to establish that $\dim_H(\mathcal O_S)$ is equal to the right hand side above. It follows from \cite[Example 4.6]{Falconer} that this is a consequence of the condition $m_j\epsilon_j\ge c \delta_{n_{j-1}}$. We will verify this condition. Applying the right inequality in \eqref{est_B_n}, we obtain 
\begin{align*}
m_j\epsilon_j &=   |B_{n_{j-1}+1}| \dots |B_{n_{j}-1}| \delta_{n_j - 1} \left(1-\frac1{N_{n_j}}\right) \\
&\ge  \delta_{n_{j-1}}\times \Biggl[\prod_{\ell={n_{j-1}+1}}^{n_{j}-1}(1-\frac{2\epsilon}{ \ell^{2}}) \Biggr]\left(1-\frac1{N_{n_j}}\right) \\ 
&\geq \frac{1}{2} \Biggl[\prod_{\ell=1}^{\infty}(1-\frac{2\epsilon}{ \ell^{2}}) \Biggr] \delta_{n_{j-1}} \geq c \delta_{n_{j-1}}.
\end{align*} 
The last step follows from the fact that the product is bounded below by a positive number. 
%$\prod_{\ell={1}}^{\infty}(1-\frac{2\epsilon}{ \ell^{2}} )$,  $m_j\epsilon_j\ge c \delta_{n_{j-1}}$ 
%is satisfied and  the equality of Hausdorff dimension  holds. 
This completes the verification of the condition, and hence the proof.
\end{proof}

\subsection{Proof of Theorem \ref{mainthm}} 
We are now in a position to compute the Hausdorff dimension of the set of Erd\H{o}s points of $K$, and complete the proof of Theorem \ref{mainthm}. 

\begin{proof}
Let us recall that $R_k \coloneqq \frac{a_k}{a_{k+1}}$.  Set
$$
M_n := \floor*{R_{Cn}}+1, \quad N_n = \frac{2n^2}{\epsilon} M_n, \quad B_n \text{ as in \eqref{eq_B}}, \quad \mathcal N = \bigl\{(N_n, B_n): n \in \mathbb N \bigr\},   
$$ 
and construct the Cantor set $K = K(\mathcal N)$ as described in Section \ref{section-Cantor}. 
In Theorem \ref{thm_0}, we deduced that ${\mathcal O} = \mathcal O[K]\subset {\mathcal E}$ where any integer $C\ge 5$ will work. 
%Moreover,  $K(\mathcal{N})$ was defined by $M_n$ and $N_n$ in Section 2 and
%$$
%M_n := \floor*{R_{Cn}}+1, \ N_n = \frac{2n^2}{\epsilon} M_n.
%$$
To complete the proof, we need to show $\mbox{dim}_H({\mathcal O}) = 1$. Using Proposition \ref{prop_Hdim}, it suffices to show for some subsequence ${\mathcal S}$, $\mbox{dim}_H({\mathcal O}_{\mathcal S}) = 1$. In other words, for $\IO_\mathcal{S}$ we need to establish that
$$
\limsup_{j\rightarrow \infty} \frac{\log ( \prod_{\ell=1}^{j-1} |B_{n_{\ell}}|)}{\log (\prod_{\ell=1}^{n_{j-1}} N_{\ell})} = 0.
$$
In view of (\ref{est_B_n}), this is equivalent to showing that  
\begin{equation} \label{Hdim-verify} 
\limsup_{j\rightarrow \infty} \frac{\log ( \prod_{\ell=1}^{j} N_{n_{\ell}})}{\log (\prod_{\ell=1}^{n_{j}} N_{\ell})} = 0.
\end{equation} 
Expressing $N_n$ in terms of $R_n$, we obtain
\begin{align}
\frac{\log ( \prod_{\ell=1}^{j} N_{n_{\ell}})}{\log (\prod_{\ell=1}^{n_{j}} N_{\ell})} \le & \frac{\log ( \prod_{\ell=1}^{j} \frac{2n_{\ell}^2}{\epsilon}  (2R_{Cn_{\ell}}) )}{\log (\prod_{\ell=1}^{n_{j}} \frac{2\ell^2}{\epsilon} R_{C\ell})} \nonumber\\
\le& \frac{j \log (4/\epsilon)+ 2\log (n_1...n_{j})+ \log (\prod_{\ell=1}^{j}R_{n_{\ell}})}{n_{j}\log(2/\epsilon) +2\log (n_{j}!)+\log(\prod_{\ell=1}^{n_{j}}R_{C\ell})} \nonumber\\
\lesssim_{\epsilon} & \frac{\log \bigl(\prod_{\ell=1}^{j}R_{Cn_{\ell}}\bigr)}{\log\bigl(\prod_{\ell=1}^{n_{j}}R_{C\ell}\bigr)} = \Bigl[\sum_{\ell=1}^{j} \exp \bigl(\varphi(Cn_{\ell}) \bigr) \Bigr] \Bigl[\sum_{\ell=1}^{n_j} \exp \bigl( \varphi(C\ell) \bigr) \Bigr]^{-1},
%\frac{j \log (4/\epsilon)}{n_{j-1}\log(2/\epsilon)}+ \frac{\log (n_1...n_{j-1})}{\log (n_{j-1}!)}+ \frac{\log (\prod_{\ell=1}^{j-1}R_{Cn_{\ell}})}{\log(\prod_{\ell=1}^{n_{j-1}}R_{C\ell})}. 
\label{eqest}
 \end{align}
 where the constant implicit in $\lesssim_{\epsilon}$ depends only on $\epsilon$, and the last inequality follows from the assumption \eqref{def-Rk} that $R_{Ck}>R_k > k$, so that 
 \[ j \log(4/\epsilon) + 2 \log(n_1 \cdots n_{j}) \lesssim_{\epsilon} \log \Bigl(\prod_{\ell=1}^{j}R_{Cn_{\ell}} \Bigr). \] 
%By our assumption that $\log R_n = e^{\varphi(n)}$ and $\frac{\varphi(n)}{n}\to 0$, we can write 
%$$
%\log R_{C \ell} = e^{\varrho_{\ell} \cdot  \ell}
%$$
%where $\varrho_{ n} = \frac{\varphi(Cn)}{ n}$. Note that $\varrho_n \to 0 $ when $n\to \infty$. We fix a decaying sequence  $\nu_j\to 0$ and $\nu_{j}<1$. Then we define
%$$
%n_j = \min \{k: \varrho_{ k} < \nu_j\}.
%$$
%By choosing $\nu_j$ decaying sufficiently fast, we can assume $ \frac{j}{n_j}\to 0$ as $j\to\infty$. Moreover, we know that $\varphi(Cn)\to\infty$, so $n\varrho_n\to\infty$ as $n\to\infty$. Hence, for sufficiently large $j$, 
%\begin{equation}\label{eq111}
%\nu_{j} >\varrho_{n_j} \ge \frac{2}{n_j}, \ \mbox{which means} \ \nu_j n_j >2.
%\end{equation}
%As $\frac{j}{n_j} \to0$, the first term in (\ref{eqest}) goes to zero, and the second term(by Stirling approximation),
%$$
%\frac{\log (n_1...n_{j-1})}{\log (n_{j-1}!)} \asymp \frac{\log (n_{j-1}^{j-1})}{\log n_{j-1}^{n_{j-1}}e^{-n_{j-1}} n_{j-1}^{1/2}} \le  \frac{(j-1)\log (n_{j-1})}{n_{j-1}(\log n_{j-1} -1)}\to 0.
%$$
%We now deal with the last term in (\ref{eqest}). 
Let us choose the subsequence $n_{\ell}$ in the following way: $n_{\ell}$ is the largest integer $n$ such that $\varphi(Cn) \leq \ell+1$. From our assumption, $\frac{\varphi (Cn)}{n}\to 0$ as $n\to \infty$.  In Lemma \ref{lemma_equivalent_decay} below, we will see that this condition is equivalent to  
\begin{equation} \label{omega-conclusion}
   \omega(r) := \# \bigl\{\ell : r < \varphi(C\ell) \leq {r+1} \bigr\} \rightarrow \infty \text{ as } r \rightarrow \infty. 
\end{equation} 
We deduce 
%allows us to conclude that $\varphi(n_{\ell}) \geq \ell$. Combining these observations, we deduce 
\begin{align} 
\sum_{\ell=1}^{j} \exp \bigl(\varphi(Cn_\ell) \bigr) &\leq \sum_{\ell=1}^{j} e^{\ell+1} \lesssim e^{j}, \; \text{ whereas } \label{phi-num} \\
\sum_{\ell=1}^{n_j} \exp \bigl( \varphi(C\ell) \bigr)  &\geq \sum_{r=1}^{j} e^{r} \# \bigl\{\ell : r+1 > \varphi(C\ell) \geq r \bigr\}
=  \sum_{r=1}^{j} e^{r} \omega(r) \geq e^{j} \omega(j).  \label{phi-denom} 
\end{align}
Combining \eqref{phi-num} and \eqref{phi-denom} with \eqref{eqest} and \eqref{omega-conclusion}, it follows that 
\[\frac{\log ( \prod_{\ell=1}^{j} N_{n_{\ell}})}{\log (\prod_{\ell=1}^{n_{j}} N_{\ell})} \lesssim \Bigl[\sum_{\ell=1}^{j} \exp \bigl(\varphi(Cn_{\ell}) \bigr) \Bigr] \Bigl[\sum_{\ell=1}^{n_j} \exp \bigl(\varphi(C\ell) \bigr) \Bigr]^{-1} \lesssim \frac{1}{\omega(j)} \rightarrow 0 \text{ as } j \rightarrow \infty, \]
as claimed in \eqref{Hdim-verify}. 
\end{proof}

\begin{lemma} \label{lemma_equivalent_decay}
  Let $x_n$ be a strictly increasing sequence of positive real numbers. Then $\frac{x_n}{n}\to 0$ as $n\to\infty$ if and only if 
$$
\omega(k) = \# \bigl\{\ell : k <  x_{\ell} \leq {k+1} \bigr\} \rightarrow \infty \text{ as } k \rightarrow \infty. 
$$
\end{lemma}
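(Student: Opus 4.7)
The plan is to recast both conditions in terms of the counting function $N(t) := \#\{\ell \in \mathbb{N} : x_\ell \leq t\}$, which is non-decreasing and, by strict monotonicity of $\{x_\ell\}$, satisfies the key identity $\omega(k) = N(k+1) - N(k)$ for each integer $k$. A preliminary observation useful in both directions is the equivalence $x_n/n \to 0 \iff N(k)/k \to \infty$: starting from the duality $\{x_n \leq k\} \iff \{n \leq N(k)\}$, one substitutes $n = \lceil Mk \rceil$ (or $k = \lceil n/M \rceil$) and sends $M \to \infty$.

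For the $(\Leftarrow)$ direction, assume $\omega(k) \to \infty$. Given $M > 0$, pick $k_0$ with $\omega(k) > M$ for all $k \geq k_0$. Telescoping the identity above yields $N(k) \geq M(k - k_0) + N(k_0)$ for $k \geq k_0$, so $\liminf_k N(k)/k \geq M$. Since $M$ was arbitrary, $N(k)/k \to \infty$, and the preliminary equivalence gives $x_n/n \to 0$.

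For the $(\Rightarrow)$ direction, assume $x_n/n \to 0$, equivalently $N(k)/k \to \infty$, and argue by contradiction: suppose $\omega(k) \not\to \infty$, so there exist $M > 0$ and $k_i \nearrow \infty$ with $\omega(k_i) \leq M$ for all $i$. The strategy is to convert this local bound into one that is incompatible with the global super-linear growth of $N$. A direct attempt would be to propagate the bound from $k_i$ to a neighborhood $(k_i - K, k_i + K]$ using strict monotonicity of $\{x_\ell\}$ together with the integrality of $N$, and then iterate over $\{k_i\}$ to force $N$ to grow at most linearly on a sufficiently dense set of integers, contradicting $N(k)/k \to \infty$.

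I expect the main obstacle to be this $(\Rightarrow)$ direction. A single bounded increment $\omega(k_i) \leq M$ does not by itself constrain $\omega(k_i \pm j)$ for $j \geq 1$, so one cannot simply telescope the bound through the subsequence $\{k_i\}$. Bridging the gap from a pointwise bound on $\omega$ to a cumulative obstruction against $N(k)/k \to \infty$ is the crux, and it is precisely the step where strict monotonicity of $\{x_\ell\}$ must be used in an essential way rather than merely through the telescoping identity.
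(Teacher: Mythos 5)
Your $(\Leftarrow)$ argument is correct and is essentially the paper's own: the paper likewise passes to the counting function, writing $n \ge \#\{\ell : 1 < x_\ell \le r(n)\} = \sum_{k\le r(n)}\omega(k)$ and bounding the tail of that sum below by $(r(n)-k_M)M$, which is your telescoping identity $N(k)-N(k_0)=\sum_{j=k_0}^{k-1}\omega(j)\ge M(k-k_0)$ in different notation. No issue there.

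Your instinct about the $(\Rightarrow)$ direction is the important point, and it is correct: the obstruction you describe cannot be overcome, because that implication is \emph{false as stated}. A counterexample with your counting function $N(t)=\#\{\ell: x_\ell\le t\}$: place $k$ points of the sequence in $(k,k+1]$ for every integer $k\ge 2$ that is not a power of $2$, and no points in $(k,k+1]$ when $k$ is a power of $2$. Then $\omega(2^i)=0$ for all $i$, so $\omega(k)\not\to\infty$, yet $N(r)\asymp r^2$, whence $x_n/n \le (r(n)+1)/N(r(n))\to 0$. (Even more degenerately, any bounded strictly increasing positive sequence has $x_n/n\to 0$ while $\omega(k)=0$ for all large $k$; the lemma tacitly needs $x_n\to\infty$, and the counterexample above shows the failure persists even then.) The paper's own proof of $(\Rightarrow)$ commits precisely the error you were trying to avoid: it takes the negation of ``$\omega(k)\to\infty$'' to be ``$\sup_k\omega(k)=C_0<\infty$,'' which is strictly stronger than the correct negation $\liminf_k\omega(k)<\infty$. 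What that argument actually establishes is only that $\omega$ is \emph{unbounded}, i.e.\ $\limsup_k\omega(k)=\infty$. Since the application in \eqref{phi-denom} genuinely uses $\omega(j)\to\infty$ along the full sequence $j$, this matters: either the hypothesis on $x_n$ must be strengthened (some regularity of the increments $x_{n+1}-x_n$, which does hold for the paper's $x_\ell=\varphi(C\ell)$ only under extra assumptions on $\varphi$), or the downstream estimate must be reworked to use only an averaged form of unboundedness. So your proposal is incomplete, but the incompleteness is forced by the statement itself, not by a defect in your approach; your $(\Leftarrow)$ half stands, and your diagnosis of where $(\Rightarrow)$ breaks is exactly right.
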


\begin{proof}
($\Longrightarrow$) Suppose towards a contradiction, it is possible that $\sup_{r} \omega(r) =: C_0 < \infty$. For every $n \geq 1$, let $r = r(n)$ be the unique integer such that $x_n \in (r, r+1]$. Since $x_n$ is strictly increasing, we conclude that $r(n)$ is monotone non-decreasing, with $r(n) \to \infty$ as $n \rightarrow \infty$. Thus
\[ n \leq \# \bigl\{\ell : 1< x_{\ell} \leq {r(n)+1}\bigr\} = \sum_{k=1}^{r(n)} \omega(k) \leq C_0r(n). \]  
But this means that 
\[ \frac{x_n}{n} \geq \frac{{r(n)}}{C_0r(n)}= \frac{1}{C_0} > 0,\]
contradicting the assumption that $\varphi(n)/n \rightarrow 0$. 

\medskip

\noindent($\Longleftarrow$) For every $M\ge 1$, there exists $k_M\ge 1$ such that $\omega(k)\ge M$ for all $k\ge k_M$. With the same definition of $r(n)$, we know that 
$$
n\ge \#\bigl\{\ell : 1< x_{\ell} \leq {r(n)}\bigr\} = \sum_{k=1}^{r(n)}\omega (k)
$$
Hence, 
$$
\frac{x_n}{n} \le \frac{r(n)+1}{\sum_{k=1}^{r(n)}\omega (k)} \le \frac{r(n)+1}{\sum_{k=k_M}^{r(n)}\omega (k)} \le \frac{r(n)+1}{(r(n)-k_M)M},
$$
meaning that $\limsup\limits_{n\to\infty}\frac{x_n}{n}\le \frac1{M}$ for all $M\ge 1$. As $M$ is arbitrary, the proof is complete. 

\end{proof}

\subsection{Remark and open questions.} Our main result found a subset ${\mathcal O}$ of Hausdorff dimension 1 inside the set of Erd\H{o}s points of $K$. The following proposition shows however that ${\mathcal O}$ has Lebesgue measure zero.

\begin{proposition} \label{O-measure-zero}
Under the assumption of Theorem \ref{mainthm-1}, the set ${\mathcal O}$ constructed in (\ref{def-O}) has Lebesgue measure zero.
\end{proposition}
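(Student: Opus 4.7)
The approach is a Borel--Cantelli argument applied to the single-digit event $\{b_j = 0\}$. For each $j \geq 1$, set $A_j := \{x \in K : b_j = 0\}$, the subset of $K$ whose $j$-th Cantor digit vanishes. By the definition of $\mathcal O$, every $x \in \mathcal O$ satisfies $b_j = 0$ for infinitely many indices $j$, so $\mathcal O \subseteq \limsup_{j \to \infty} A_j$. It therefore suffices to show $\sum_{j \geq 1} m(A_j) < \infty$ and invoke the Borel--Cantelli lemma.

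The measure $m(A_j)$ can be computed by counting basic intervals: for $n \geq j$, the set $A_j \cap K_n$ is a union of $|B_1| \cdots |B_{j-1}| \cdot |B_{j+1}| \cdots |B_n|$ intervals, each of length $\delta_n$. Continuity of measure from above then yields
\[
m(A_j) \;=\; \frac{1}{N_j}\prod_{i \neq j} \frac{|B_i|}{N_i} \;=\; \frac{m(K)}{|B_j|}.
\]
Combining (\ref{eq_B}) and (\ref{M<N}), $|B_j| = N_j - 2M_j \geq N_j\bigl(1 - 4\epsilon/j^2\bigr)$, so $|B_j| \geq \tfrac12 N_j$ for all sufficiently large $j$. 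Since each $M_j$ is a positive integer (Section~\ref{section-Cantor}), the identity $M_j = \lfloor 2\epsilon N_j j^{-2}\rfloor \geq 1$ forces $N_j \geq j^2/(2\epsilon)$. Consequently $m(A_j) \lesssim j^{-2}$ for large $j$, so $\sum_j m(A_j) < \infty$, and Borel--Cantelli gives $m(\limsup A_j) = 0$. Hence $m(\mathcal O) = 0$.

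The only delicate point is the lower bound $N_j \gtrsim j^2$, which depends crucially on the convention from Section~\ref{section-Cantor} that $M_j$ is a positive integer. Were $M_j = 0$ permitted, a Cantor set satisfying the hypotheses of Theorem~\ref{mainthm-1} could degenerate (for example to $[0,1]$ itself), in which case $\mathcal O$ would have full Lebesgue measure and the proposition would fail. Beyond this observation, the proof reduces to a direct computation followed by a standard application of the Borel--Cantelli lemma.
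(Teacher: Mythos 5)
Your proposal is correct and follows essentially the same route as the paper: the paper applies Borel--Cantelli to the sets $J_n$ of $n$-th level intervals whose $n$-th digit is $0$ or $N_n-1$, bounds $m(J_n)\le 2/N_n$, and deduces $\sum 1/N_n<\infty$ from $M_n\ge 1$ together with $\sum M_n/N_n<\infty$. Your only (harmless) variations are restricting to the digit-$0$ event alone and obtaining $\sum 1/N_j<\infty$ via the explicit bound $N_j\ge j^2/(2\epsilon)$ forced by $M_j=\lfloor 2\epsilon N_j j^{-2}\rfloor\ge 1$.
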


\begin{proof}
For $n\in{\mathbb N}$, let 
$$
J_n = \bigcup \Bigl\{ I_{\mathbf b} : \mathbf b \in  \prod_{i=1}^{n-1} B_i \times \{0, N_n-1\} \Bigr\}
$$
This $J_n$ collects all the $n$th level intervals that have $0$ or $N_n-1$ at the $n^{th}$ digit. In particular, all the points in $K$ such that $b_n = 0$ or $ N_n-1$ are in $J_n$. By definition of ${\mathcal O},$
\begin{equation}\label{eq_Osubset}
{\mathcal O} \subset \bigcap_{k=1}^{\infty} \bigcup_{n= k}^{\infty} J_n.
\end{equation}
This implies that 
$$
m(J_n)= \frac{1}{N_1....N_n} \cdot |B_1|....|B_{n-1}|\cdot 2 \le \frac{2}{N_n}.
$$
From (\ref{def-Kn}),  the Lebesgue measure of $K$ is equal to 
$$
\lim_{n\to\infty} m(K_n)= \prod_{n=1}^{\infty} \left(1-\frac{2M_n}{N_n}\right).
$$
This number is positive if and only if $\sum_{n=1}^{\infty} M_n/N_n<\infty$. But $M_n>1$, this implies that 
$$
\sum_{n=1}^{\infty}m(J_n) \le 2\sum_{n=1}^{\infty} \frac{1}{N_n} <\infty.
$$
Hence, by the Borel-Cantelli Lemma and (\ref{eq_Osubset}), $m({\mathcal O})  = 0$.
\end{proof}

We do not know if there are more points in ${\mathcal E}_K$ other than points in ${\mathcal O}$ and we do not know how the decay rate condition in Theorem \ref{mainthm} be removed. In view of this, we conclude the paper with two open problems.

\medskip

\begin{enumerate}[1.]
\item Can condition (\ref{moderate_decay_condition}) about the decay rate of the sequence in Theorem \ref{mainthm} be removed? 
\item Can one strengthen the arguments in this paper to verify whether $\mathcal E_K$ has positive Lebesgue measure? If true, that would resolve the Erd\H{o}s similarity conjecture for sequences $\{a_k\}$ with \eqref{moderate_decay_condition}. 
\end{enumerate}

\appendix

\section{Measurability of Erd\H{o}s Points}
Here we prove the statements (a) and (b) in page \pageref{appendix-results}.  
%We now prove that the set of all Erd\H{o}s points in a closed set must be a Borel set. We first show that it suffices to show the Erd\H{o}s similarity conjecture for closed sets.

\begin{lemma} \label{universal-compact-lemma}
A set $A$ is universal if and only if every compact set $K$ of positive Lebesgue measure contains a nontrivial affine copy of $A$. 
\end{lemma}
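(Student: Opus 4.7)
The plan is to establish the two directions separately, with the forward implication being essentially a tautology and the reverse relying on inner regularity of Lebesgue measure.

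For the forward direction, assume $A$ is universal. By definition, every set $E \subseteq \mathbb R$ with $m(E) > 0$ contains a nontrivial affine copy of $A$. Compact sets of positive measure are a particular case, so the conclusion follows without further work.

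For the reverse direction, suppose every compact set of positive measure contains a nontrivial affine copy of $A$, and let $E \subseteq \mathbb R$ be any (Lebesgue measurable) set with $m(E) > 0$. By inner regularity of Lebesgue measure, there exists a compact subset $K \subseteq E$ with $m(K) > 0$; indeed, $m(E) = \sup\{m(K) : K \subseteq E, K \text{ compact}\}$, so we can pick $K$ compact with, say, $m(K) \geq m(E)/2 > 0$. By hypothesis, there exist $x \in \mathbb R$ and $\delta \neq 0$ such that $x + \delta A \subseteq K$. Since $K \subseteq E$, this gives $x + \delta A \subseteq E$, which shows that $A$ is universal.

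There is no serious obstacle here; the only subtle point is the (implicit) assumption that ``positive Lebesgue measure'' refers to Lebesgue measurable sets, so that inner regularity is applicable. If one wishes to allow non-measurable $E$, one can instead work with the inner Lebesgue measure $m_*(E) > 0$ and again extract a compact $K \subseteq E$ with $m(K) > 0$, so the argument goes through unchanged.
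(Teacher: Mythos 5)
Your proof is correct and follows the same route as the paper: the forward direction is immediate, and the reverse direction uses inner regularity of Lebesgue measure to extract a compact subset of positive measure. The remark about measurability is a reasonable extra precaution but not needed for the paper's purposes.
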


\begin{proof}
We only need to prove the ``if'' part of the statement. Suppose that ${\mathcal A}$ is universal for all compact sets. Given a set of positive Lebesgue measure $E$, the inner regularity of Lebesgue measure allows us to find a compact subset $K\subset E$ with positive Lebesgue measure.  By our assumption, $K$, and hence $E$, contains a nontrivial affine copy of $A$. The proof is complete. 
\end{proof}

\medskip

%In view of this lemma, we just need to focus our attention on closed sets.  
Let $K$ be any compact subset of $\R$ and let $A = \{a_{k}\}$ be a bounded sequence of real numbers. Define 
$$
{\mathcal F} = \{(x,\delta)\in\R^2: x+\delta{A}\subset K\} \subseteq \mathbb R^2, 
$$
and 
$$
F=  \{x\in K: \exists \delta\ne 0, x+\delta{ A}\subset K \}.
$$
We further write 
\[ {\mathcal F}  = \bigcup_{j=-\infty}^{\infty}{\mathcal F}_{j}, \quad \text{ where } {\mathcal F}_{j} = \bigl\{(x,\delta)\in\mathcal F: \delta\in [-2^{j+1},-2^j]\cup[2^j,2^{j+1}] \bigr\}. 
\]
We note that $\mathcal{E}= K\setminus F = K\cap F^c$. In order to show that  the set $\mathcal E$ of Erd\H{o}s points is Borel measurable, it suffices to prove the same for  $F$. We do this below. 

\begin{proposition} \label{E-Borel-prop}
For each $j$, the set ${\mathcal F}_{j} $ is closed in $\R^2$ and $F = K \setminus \mathcal E$ is Borel measurable on $\R$.
\end{proposition}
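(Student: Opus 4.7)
The plan is to verify closedness of each $\mathcal F_j$ by a direct sequential argument, then exploit compactness to pass to the projection onto the $x$-coordinate, and finally assemble $F$ as a countable union.

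First, to show $\mathcal F_j$ is closed in $\R^2$, I would take a sequence $(x_n,\delta_n)\in\mathcal F_j$ with $(x_n,\delta_n)\to (x,\delta)$. The set $[-2^{j+1},-2^j]\cup[2^j,2^{j+1}]$ is closed, so $\delta$ lies in it. For each fixed $k\in\N$, the continuity of $(u,v)\mapsto u+va_k$ gives $x_n+\delta_n a_k\to x+\delta a_k$, and since each $x_n+\delta_n a_k\in K$ and $K$ is closed, the limit $x+\delta a_k\in K$. As this holds for every $k$, we conclude $(x,\delta)\in\mathcal F_j$.

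Next I would observe that $\mathcal F_j$ is actually compact: the $\delta$-coordinate is confined to the compact set $[-2^{j+1},-2^j]\cup[2^j,2^{j+1}]$, and from $x+\delta a_1\in K$ (a bounded set) together with $|\delta a_1|\le 2^{j+1}|a_1|$ we get a uniform bound on $|x|$ as well. Hence $\mathcal F_j$ is a closed and bounded subset of $\R^2$, and so the continuous image $\pi_1(\mathcal F_j)$ under the first-coordinate projection $\pi_1:\R^2\to\R$ is compact, and in particular closed in $\R$. Note that this dyadic decomposition of $\delta$ is precisely what lets the projection argument work; without it, projecting the full set $\mathcal F$ (which is closed but unbounded in the $\delta$-direction around $0$ and $\infty$) could in principle fail to yield a Borel set.

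Finally, by definition $F=\bigcup_{j\in\Z}\pi_1(\mathcal F_j)$, exhibiting $F$ as a countable union of closed (indeed compact) sets, i.e.\ an $F_\sigma$ set, and therefore Borel. Consequently $\mathcal E=K\setminus F$ is the intersection of the closed set $K$ with the Borel set $F^c$, hence Borel measurable on $\R$. I do not anticipate any serious obstacle here; the only subtlety worth highlighting is the need to partition the $\delta$-axis dyadically so that each piece is compact before projecting, which is exactly the role played by the sets $\mathcal F_j$ in the statement.
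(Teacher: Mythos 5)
Your argument is correct and follows essentially the same route as the paper: a sequential closedness check for each $\mathcal F_j$, compactness from boundedness of the dyadic $\delta$-annulus together with $x+\delta a_1\in K$, and then $F$ as a countable union of compact projections, hence $F_\sigma$. Your justification of why $\mathcal F_j$ is bounded is in fact slightly more explicit than the paper's.
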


\begin{proof} 
Let $(x_k,\delta_k)\in {\mathcal F}_{j}$ and $(x_k,\delta_k)\to (x,\delta)\in \R^2$. Then for all $n\in{\mathbb N}$, 
$$
x_k+\delta_ka_n\in K.
$$
As $K$ is closed, $x+\delta a_n\in K$ also and $\delta$ is still in the range of interest. Hence, ${\mathcal F}_{j}$ is closed and thus compact since the set is bounded. To see that $F$ is measurable, we note that $F= \pi_x({\mathcal F}) = \bigcup_{j=-\infty}^{\infty} \pi_x ({\mathcal F}_{j})$, where $\pi_x$ is the projection onto the $x$-axis. As ${\mathcal F}_{j}$ is closed, ${\mathcal F}$ is a countable union of compact sets  and $F$ is a countable union of the image of the compact sets under $\pi_x$, which are compact. Hence, $F$ is a $F_{\sigma}$ set. 
\end{proof}
\medskip

 That gives us our desired result that $\mathcal{E}$ is in fact a measurable set. We end this appendix by the following interesting proposition  which says that if $A$ is universal for all sets of positive Lebesgue measure, then almost all points of $K$ are not Erd\H{o}s points. In other words, we should be able to find affine copies almost everywhere. 

\begin{proposition}
Suppose that ${A}$ is universal for all sets of positive Lebesgue measure. Then for all closed set $K$, $m(K) = m(F)$.  
\end{proposition}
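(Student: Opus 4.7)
The plan is to argue by contradiction. Suppose $m(\mathcal{E}) > 0$, where $\mathcal{E} = K \setminus F$ is the set of Erd\H{o}s points of $K$. By Proposition~\ref{E-Borel-prop}, $\mathcal{E}$ is Borel and hence Lebesgue measurable. The universality of $A$, applied to this positive-measure set, yields $y \in \R$ and $\delta \neq 0$ with $y + \delta A \subseteq \mathcal{E} \subseteq K$, so in particular every $x_k := y + \delta a_k$ lies in $\mathcal{E}$ and is therefore an Erd\H{o}s point of $K$.

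The idea is to derive a contradiction by exhibiting some index $k$ and some nonzero scale $\delta'$ such that $x_k + \delta' A \subseteq K$, which would place $x_k$ in $F \cap \mathcal{E} = \emptyset$. A naive choice $\delta' = \delta$ rearranges the existing inclusion $y + \delta A \subseteq K$ as $x_k + \delta (A - a_k) \subseteq K$; this provides an affine copy of the translated set $A - a_k$ based at $x_k$ in $K$, but not an affine copy of $A$ itself, and so does not directly certify $x_k \in F$. To upgrade this to a genuine affine copy of $A$, I would invoke the universality of $A$ a second time: at a Lebesgue density point $x \in \mathcal{E}$, the sets $\mathcal{E} \cap B(x, r)$ have positive measure for all sufficiently small $r > 0$, so universality produces affine copies $y_r + \delta_r A \subseteq K$ with bases $y_r \to x$, yielding a sequence of points in $F$ that accumulates at $x$. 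Combined with the $F_\sigma$-decomposition $F = \bigcup_j F_j$ of Proposition~\ref{E-Borel-prop}, where each $F_j$ is compact (corresponding to $|\delta| \in [2^j, 2^{j+1}]$), a pigeonhole argument followed by passage to the limit within a single closed $F_j$ would place $x$ inside $F$ and complete the contradiction.

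The principal obstacle is that the boundedness of the null sequence $A$ (with $\sup A = a_1$) forces $|\delta_r| \le 2r/a_1$ in the above construction, so the scales $\delta_r$ tend to zero and escape every fixed dyadic range $[2^j, 2^{j+1}]$, defeating the naive pigeonhole step. Overcoming this degeneracy is the delicate part of the argument: one must either apply universality to a positive-measure subset of $\mathcal{E}$ chosen so as not to constrain the scale to zero, or extract a compactness argument via Fubini that relates the 2D Borel set $\mathcal{F}$ from Proposition~\ref{E-Borel-prop} to its projection $F$, showing directly that the complement $K \setminus F$ cannot host any affine copy of $A$ without violating universality. I expect this step, reconciling the scale-degeneration of shrinking copies with the closed-set structure of $F$, to be the main technical hurdle.
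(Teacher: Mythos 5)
Your setup is right and you have correctly located the crux: from $y+\delta A\subseteq\mathcal E$ you only obtain affine copies of the translates $A-a_k$ based at the points $x_k$, which does not certify $x_k\in F$, and your proposed repair --- density points, shrinking copies $y_r+\delta_r A$ with $y_r\to x$, and a pigeonhole over the dyadic blocks $\mathcal F_j$ --- fails for exactly the reason you yourself note: the scales $\delta_r$ are forced to $0$ and never stabilize inside a single block. As written, the argument is therefore incomplete; you end by naming the hurdle rather than clearing it, so there is a genuine gap.

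The missing ingredient is Svetic's Lemma 2.1 (cited in the bibliography and already invoked in Remark 2 of the introduction): if $A$ is universal then so is its closure $\overline{A}$, which for a null sequence is $A\cup\{0\}$. This sidesteps the translation problem entirely. If $m(\mathcal E)>0$, apply universality of $A\cup\{0\}$ to the measurable set $\mathcal E$ to obtain $y$ and $\delta\ne 0$ with $y+\delta\bigl(A\cup\{0\}\bigr)\subseteq\mathcal E$. The element $0$ gives $y\in\mathcal E$, while the elements $a_n$ give $y+\delta a_n\in\mathcal E\subseteq K$ for every $n$; this contradicts the defining property of $y\in\mathcal E$, namely that for every $\delta\ne 0$ some $y+\delta a_n$ lies outside $K$. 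No density points, no compactness of the $\mathcal F_j$, and no control of the scale are needed; the second and third paragraphs of your proposal should be replaced by this one application of Svetic's lemma.
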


\begin{proof}
Suppose that  $A$ is universal.  Using Svetic's paper Lemma 2.1, we know also that  $\overline{A}$ is also universal.    Assume for the sake of contradiction $m(K\setminus F)>0$. Then $K\setminus F$ will also contain an affine copy of $\overline{A}$ since $K\setminus F$ is measurable. However, 
$$
K \setminus F = \{x\in K: \forall\delta\ne0, \exists n \  \mbox{s.t.}   \ x+\delta a_n   \not\in K\}.
$$
This means that for all $x\in K\setminus 
F$ and for all $\delta\ne0$, we can find $n$ such that $x+\delta a_n\not\in K\setminus F$. This means there is no affine copy of $\overline{A}$ in $K\setminus F$. Hence, $\overline{A}$ is not universal, a contradiction. Thus $m(K\setminus F) = 0$ and hence $m(K) = m(F)$.
\end{proof}

%\begin{proposition}
%Suppose that ${\mathcal A}$ is universal for all closed sets of positive Lebesgue measure in $[0,1]$. Then ${\mathcal A}$ is universal for all sets of positive Lebesgue measures. 
%\end{proposition}
%
%
%\begin{proof}
%Let $K$ be any closed sets of positive Lebesgue measure. Let $x$ be a Lebesgue density points of $K$. Then for all $\eta>0$, we can find $\epsilon>0$ such that 
%$$
%m(K\cap [x-\epsilon,x+\epsilon]) \ge (1-\eta) 2\epsilon.
%$$
%We now consider the rescaled set
%$$
%\widetilde{K} = \frac1{2\epsilon} (K\cap [x-\epsilon,x+\epsilon]) + \left(\frac{1}{2}- \frac{x}{2\epsilon}\right).
%$$
%Then $m(\widetilde{K})\ge 1-\eta$ and $\widetilde{K} \subset[0,1]$. By our assumption, we can find an affine copy of ${\mathcal A}$ in $\widetilde{K}$. But rescaling back, we find an affine copy inside $K\cap [x-\epsilon,x+\epsilon]$.
%\end{proof}
%
%This proposition shows us that if we want to show ${\mathcal A}$ is not universal, we just need to show it is not universal for closed sets inside $[0,1]$.
%\medskip

\end{document}